\documentclass[a4paper,11pt,reqno]{amsart}
\usepackage{amssymb, amsmath, graphicx, amsthm, enumitem, multicol,amsfonts}
\usepackage[margin=1in]{geometry}
\usepackage{cite}
\usepackage{ bbold }
\usepackage{longtable}
\usepackage{booktabs}
\usepackage{placeins}
\usepackage{hyperref}
\usepackage{tikz-cd}
\usepackage[utf8]{inputenc}
\theoremstyle{definition}
\newtheorem{definition}{Definition}
\newtheorem {proposition}   {Proposition}
\newtheorem {theorem}    {Theorem}

\DeclareMathOperator{\C}{\mathbb{C}}

\DeclareMathOperator{\Res}{Res}
\DeclareMathOperator{\Z}{\mathbb{Z}}
\DeclareMathOperator{\N}{\mathbb{N}}

\DeclareMathOperator{\End}{End}

\newtheorem {corollary}  [theorem]    {Corollary}

\newtheorem*{remark}{Remark}
\newcommand{\ba}{\begin{align}}
\newcommand{\ea}{\end{align}}
\newcommand{\bea}{\begin{eqnarray}}
\newcommand{\eea}{\end{eqnarray}}
\newcommand{\be}{\begin{equation}}
\newcommand{\ee}{\end{equation}}

\numberwithin{equation}{subsection}

\theoremstyle{plain}

\newtheorem*{theorem-non}{Theorem}

\theoremstyle{definition}

\theoremstyle{remark}

\author{Darlayne Addabbo}
\email{addabbd@sunypoly.edu}

\title{The Huang Algebra Ideal and the Diagonal Shift Property}
\date{\today}

\begin{document}

\thanks{2000 Mathematics subject classification. Primary 17B68, 17B69, 81R10, 81T40}

\maketitle
\begin{abstract}Let $V$ be a grading-restricted vertex algebra and let $A^\infty(V )=U^\infty(V)/Q^\infty(V)$ 
be the associative algebra constructed by Huang, where $U^\infty(V)$ is the space of 
column-finite infinite matrices with entries in $V$ and $Q^\infty(V)$ is an ideal of 
a (nonassociative) algebra structure on $U^\infty(V)$ defined by Huang. Huang introduced families of elements in $Q^\infty(V)$ and conjectured that these elements generate $Q^\infty(V)$. We discover and prove that Huang’s elements all satisfy what we call “the diagonal shift property". On the other hand, in the case that $V$ is the rank one Heisenberg vertex operator algebra, we construct infinitely many linearly independent elements in $Q^\infty(V)$ that do not satisfy the diagonal shift property. As a corollary, we disprove Huang's conjecture.
\end{abstract}
\section{Introduction}
Given a vertex operator algebra (VOA) $V$ Zhu defined an associative algebra $A(V)$ such that there is a one-to-one correspondence between irreducible admissible $V$-modules and $A(V)$-modules \cite{Z}. Frenkel and Zhu proved that in the case that $V$ is rational, the category of admissible $V$-modules and the category of $A(V)$-modules are equivalent \cite{FZ}. These algebras were crucial in Zhu's proof of the modular invariance of correlation functions for elements in rational $C_2$-cofinite VOAs.

While Zhu algebras encode the essential information about modules of rational VOAs, if one wants to study irrational VOAs, Zhu algebras are in general insufficient. For this reason, Dong, Li, and Mason introduced ``higher level Zhu algebras". Given a VOA $V$ one has an infinite family of associative algebras $A_n(V)$, $n\in \mathbb{N}$. The level zero Zhu algebra $A_0(V)$ is Zhu's original algebra $A(V).$ Dong, Li, and Mason proved several results about the relationship between $V$-modules and $A_n(V)$-modules, and this relationship was further studied in \cite{BVY_gen}. Miyamoto introduced pseudo-traces for $C_2$-cofinite irrational VOAs and proved that the space of these pseudo-traces is invariant under the action of $\mathrm{SL}_2(\Z)$ \cite{Miy}.

In \cite{H}, Huang proved the modular invariance of intertwining operators for rational VOAs satisfying the same conditions as those in \cite{Z}. He subsequently conjectured the modular invariance of logarithmic intertwining operators for $C_2$-cofinite, CFT-type VOAs.

In \cite{H2}, given a grading-restricted vertex algebra, Huang introduced an associative algebra $A^\infty(V)$ and proved that the set of equivalence classes of lower-bounded generalized $V$-modules are in bijection with the set of equivalence classes of nondegenerate graded $A^\infty(V)$-modules. He also defined a family of subalgebras $A^n(V)$, $n\in \mathbb{N}$, of $A^\infty(V)$ and proved that there is a bijection between the set of equivalence classes of irreducible lower-bounded generalized $V$-modules and the set of equivalence classes of irreducible nondegenerate graded $A^n(V)$-modules. Given $n\in \mathbb{N}$, the higher level Zhu algebra $A_n(V)$ is a subalgebra of $A^n(V)$, and hence $A^\infty(V)$ contains all of the higher level Zhu algebras as subalgebras. Huang further studied the algebras $A^\infty(V)$ and $A^n(V)$ $n\in \N$ in \cite{H3}, and in \cite{H5} he used them to prove his conjecture on the modular invariance of logarithmic intertwining operators, a result which subsumed the earlier modular invariance results of Huang, Miyamoto, and Zhu, and filled some gaps in \cite{Miy}.

Given a VOA $V$ and $n\in \mathbb{N}$ the higher level Zhu algebra $A_n(V)$ is a quotient of $V$ by an ideal denoted $O_n(V).$ One has an explicit description of $O_n(V)$ in terms of VOA elements \cite{DLM}, which we review below in Section \ref{higher_level}. This explicit description is useful in describing higher level Zhu algebras in terms of generators and relations, see e.g. \cite{AB1, AB2, BVY_Heis, BVY_Vir}. It also allows one to construct higher level Zhu algebras using only the VOA at hand, without relying on information about the modules of the VOA that the higher level Zhu algebras encode. This is crucial if one wants to use higher level Zhu algebras to study modules of VOAs.

Given a VOA $V$, $n\in \mathbb{N},$ and an admissible $V$-module $M$, Dong, Li, and Mason \cite{DLM} defined \begin{equation*}
\Omega_n(M):=\{w\in M|v_kw=0\text{ if wt}\,v-k-1<-n\text{ for }v\in V\text{ homogeneous weight}\}.
\end{equation*} The subspace $\Omega_n(M)$ of $M$ is an $A_n(V)$ module and the action of $A_n(V)$ is given by zero modes: Any element $v+O_n(V)$ in $A_n(V)$ acts on $\Omega_n(M)$ by the zero mode $o(v)$ of $v$. The ideal $O_n(V)$ consists precisely of every element in $V$ whose zero mode acts as $0$ on $\Omega_n(M)$ for all admissible $V$-modules $M$ \cite{DLM}.

In contrast to the higher level Zhu algebras which encode information about the action of VOAs on their admissible modules by zero modes, Huang algebras encode the action by all modes. Given a grading-restricted vertex algebra, the Huang algebra is a quotient not of $V$ but of the space $U^\infty(V)$ of column-finite infinite matrices, doubly indexed by $\N$, with entries in $V$. The associative algebra $A^\infty(V)$ is then defined to be $U^\infty(V)/Q^\infty(V),$ where $Q^\infty(V)$ is a certain ideal that is defined abstractly: Given a lower-bounded generalized $V$-module $M$, Huang constructs an $\N$-graded space $\mathrm{Gr}(M)=\displaystyle\oplus_{n\in \mathbb{N}}\,\mathrm{Gr}_n(M)$ and defines an action of $U^\infty(V)$ on this space. Then $Q^\infty(V)$ consists of all elements of $U^\infty(V)$ that act as zero on $\mathrm{Gr}(M)$ for all lower-bounded generalized $V$-modules $M$ \cite{H2}.

To date, there is no explicit description of the ideal $Q^\infty(V)$ in terms of vertex algebra elements. In \cite{H2}, given a grading-restricted vertex algebra $V$, Huang defined a subspace $O^\infty(V)$ of $U^\infty(V)$ analogous to the subspaces $O_n(V)$ of $V$ and proved that $O^\infty(V)$ is a subspace of $Q^\infty(V).$ He also defined elements of $U^\infty(V)$ arising from the Jacobi identity and proved that these elements are in $Q^\infty(V)$ \cite{H3}. He conjectured in \cite{H2} that $O^\infty(V)$ and these Jacobi identity elements generate all of $Q^\infty(V)$. In this paper, we introduce a property called the ``diagonal shift property" and prove that elements of $O^\infty(V)$ and the Jacobi identity elements constructed by Huang satisfy this property. In the case that $V$ is the rank one Heisenberg VOA we prove that there exist elements of $Q^\infty(V)$ that do not satisfy the diagonal shift property, and as a corollary, we find that Huang's conjecture is not in general true.

We briefly summarize the structure of this paper. In Section \ref{background} we review the definitions of grading-restricted vertex algebras and their lower-bounded generalized $V$-modules. We then review higher level Zhu algebras and some facts about them which will be used in the paper. Following this, we provide background on Huang algebras. In Section \ref{diagonal shift} we define the diagonal shift property and prove that for a grading-restricted vertex algebra, elements of $O^\infty(V)$ and the Jacobi identity elements constructed by Huang satisfy the diagonal shift property. In Section \ref{Heis} we review the construction of the rank one Heisenberg VOA $M(1)$ and construct an infinite family of linearly independent elements in $Q^\infty(M(1))$ that do not satisfy the diagonal shift property. All vector spaces in this paper are assumed to be complex, $\N$ denotes the nonnegative integers, and $\Z_+$ denotes the strictly positive integers. 

\subsection*{Acknowledgments}
The author is extremely grateful to Yi-Zhi Huang for helpful comments on this work. She is also grateful for an AMS-Simons Research Enhancement PUI Grant 104909. This work was partially completed during the author's stay at the Max Planck Institute for Mathematics. She thanks MPIM for their hospitality.

\section{Background}\label{background}

Huang algebras are constructed for grading-restricted vertex algebras, which are generalizations of vertex operator algebras. For background on vertex operator algebras, we refer the reader to the standard texts \cite{FLM, FHL, LL}. The definition of a grading-restricted vertex algebra $V$ can be found e.g. in \cite{H1}. To keep this paper self-contained, we restate the definition here. While the definition in \cite{H1} uses the duality properties rather than explicitly stating the Jacobi identity, here we state the definition using the Jacobi identity, as this will be used often throughout the paper. For the same reason, the definition we give of a lower-bounded generalized $V$-module will involve an explicit statement of the Jacobi identity rather than the duality properties.
\subsection{Grading-restricted vertex algebras and their lower-bounded generalized modules}
\begin{definition}\cite{H1} A \emph{grading-restricted vertex algebra} is a $\Z$-graded vector space $V=\coprod_{n\in\Z}V_{(n)}$ equipped with a vertex operator map 
\begin{eqnarray*}
V&\rightarrow& \End\, V[[x, x^{-1}]],\\v&\mapsto& Y(v, x)=\sum_{n\in \Z}v_n x^{-n-1}
\end{eqnarray*}and a distinguished vector ${\bf 1}\in V_{(0)}$ called the vacuum vector, satisfying the following conditions for all $a,b\in V$:
\begin{enumerate}[label=\Roman*.]
\item Grading restriction condition: For $n\in\Z$, $\dim V_{(n)}<\infty$ and $V_{(n)}=0$ when $n$ is sufficiently negative.
\item Lower truncation condition: $a_nb=0$ for $n$ sufficiently large.
\item Identity property: $Y({\bf 1},x)=1.$
\item Creation property: $Y(a,x){\bf 1}\in V[[x]]$ and $\lim_{x\rightarrow 0}Y(a,x){\bf 1}=a.$
\item The Jacobi identity:
\begin{equation}\sum_{i=0}^\infty\binom{m}{i}(a_{l+i}b)_{m+n-i}=\sum_{i=0}^\infty(-1)^i\binom{l}{i}a_{m+l-i}b_{n+i}-\sum_{i=0}^\infty(-1)^{l+i}\binom{l}{i}b_{n+l-i}a_{m+i}
\end{equation}for all $l,m,n\in\Z.$
\item $L(0)$-bracket formula: Let $L(0):V\rightarrow V$ be defined by $L(0)v=nv$ for $v\in V_{(n)}$. Then
\begin{equation}\label{Lzero}
[L(0), Y(a,x)]=Y(L(0)a,x)+x\frac{d}{dx}Y(a,x).
\end{equation}
\item $L(-1)$-derivative formula: Let $L(-1):V\rightarrow V$ be the operator defined by
\begin{equation}\label{Lminus}L(-1)v=\Res_x x^{-2}Y(v,x){\bf 1}
\end{equation} for $v\in V.$ Then for $v\in V$ we have
\begin{equation}\frac{d}{dx}Y(u,x)=Y(L(-1)u,x)=[L(-1),Y(u,x)].
\end{equation}
\end{enumerate}
\end{definition}
Essentially, a grading-restricted vertex algebra satisfies all of the properties of a vertex operator algebra, except that instead of requiring the existence of a conformal vector, we require that items VI. and VII. above hold.
\begin{definition}\cite{H2} Given a grading-restricted vertex algebra $V$, a \emph{lower-bounded generalized $V$-module} is a $\C$-graded vector space $W=\coprod_{n\in \C}W_{(n)}$ with a vertex operator map
\begin{eqnarray*}
V&\rightarrow& \End\, W[[x, x^{-1}]],\\v&\mapsto& Y_W(v, x)=\sum_{n\in \Z}v_n x^{-n-1}
\end{eqnarray*} satisfying
\begin{enumerate}[label=\roman*.]
\item Grading restriction condition: $W_{(n)}=0$ when the real part of $n$ is sufficiently negative. In contrast to the grading restriction condition for a grading-restricted vertex algebra, we do not require that $\dim W_{(n)}<\infty.$
\item Lower truncation condition: For all $a\in V$ and $b\in W$, $a_nb=0$ for $n$ sufficiently large.
\item Identity property: $Y_W({\bf 1},x)=1.$
\item The Jacobi identity:
\begin{equation}\sum_{i=0}^\infty\binom{m}{i}(a_{l+i}b)_{m+n-i}=\sum_{i=0}^\infty(-1)^i\binom{l}{i}a_{m+l-i}b_{n+i}-\sum_{i=0}^\infty(-1)^{l+i}\binom{l}{i}b_{n+l-i}a_{m+i}
\end{equation}for all $l,m,n\in\Z$ and $a,b\in V.$
\item $L_W(0)$-bracket formula: There exists an operator $L_W(0):W\rightarrow W$ such that \begin{equation}[L_W(0),Y_W(a,x)]=Y_W(L(0)a,x)+x\frac{d}{dx}Y_W(a,x).
\end{equation}In contrast to grading-restricted vertex algebras, the $W_{(n)}$ are no longer eigenspaces of $L_W(0)$ with eigenvalue $n$ but instead are generalized eigenspaces with eigenvalue $n$: $W_{(n)}:=\{w\in W|(L_W(0)-n)^kw=0\text{ for some }k\in \Z_+\}.$
\item $L_W(-1)$-derivative formula: There is an operator $L_W(-1):W\rightarrow W$ such that for all $u\in V$
\begin{equation}\frac{d}{dx}Y_W(u,x)=Y_W(L(-1)u,x)=[L_W(-1),Y_W(u,x)].
\end{equation}
\end{enumerate}
 
\end{definition}

Lower-bounded generalized modules of a grading-restricted vertex algebra have a canonical $\N$-grading. The Huang algebra $A^\infty(V)$ can be defined using the canonical $\N$-grading of $V$-modules \cite{H3}, however we will not use this description in this paper. We review the definition of the canonical $\N$-grading here since it will be used in Section \ref{Heis}.

\begin{definition} \cite{H3}
Let $V$ be a grading-restricted vertex algebra and let $W$ be a lower-bounded generalized $V$-module. For $\mu\in \mathbb{C}/\mathbb{Z}$, let $W^\mu$ be the generalized $V$-submodule of $W$ spanned by homogeneous elements of weights in $\mu$. Define
\[\Gamma(W)=\{\mu \in \mathbb{C}/\mathbb{Z}|W^\mu\ne 0\}.\] For $\mu\in \Gamma(W)$, there exists $h^\mu\in \mathbb{C}$ such that 
\[W^\mu=\coprod_{n\in \mathbb{N}}W_{[h^\mu+n]}\] and $W_{[h^\mu]}\ne 0$.  Then 
\[W=\coprod_{\mu\in \Gamma(W)}W^\mu=\coprod_{\mu \in \Gamma(W)}\coprod_{n\in \mathbb{N}}W_{[h^\mu+n]}.\] Given $n\in \mathbb{N}$, define
\[W_{[[n]]}=\coprod_{\mu \in \Gamma(W)}W_{[h^\mu+n]}.\] Then the \emph{canonical $\mathbb{N}$-grading} of $W$ is given by
\[W=\coprod_{n\in \mathbb{N}}W_{[[n]]}.\]
\end{definition}

\subsection{Higher Level Zhu Algebras}\label{higher_level}
In this subsection we review higher level Zhu algebras and facts about them which will motivate some of the proofs in this paper. This material is from \cite{DLM}, see also \cite{AB1} and \cite{BVY_gen}.

\begin{definition}\cite{DLM} Given a vertex operator algebra $V$ and $n\in \mathbb{N}$, define
$O_n(V)$ to be the subspace of $V$ spanned by elements of the form
\begin{equation}\label{define-circ}
u \circ_n v =
\Res_x \frac{(1 + x)^{\mathrm{wt}\, u + n}Y(u, x)v}{x^{2n+2}}
\end{equation}
for all homogeneous $u \in V$ and for all $v \in V$, and by elements of the form $(L(-1) + L(0))v$ for all $v \in V$. The vector space $A_n(V)$ is defined to be the quotient space $V/O_n(V)$. Then $A_n(V)$ is an associative algebra with product $*_n$ defined by
\begin{equation}\label{*_n-definition}
u *_n v = \sum_{m=0}^n(-1)^m\binom{m+n}{n}\Res_x \frac{(1 + x)^{\mathrm{wt}\, u + n}Y(u, x)v}{x^{n+m+1}},
\end{equation}
for $v \in V$ and homogeneous $u \in V$. For general $u \in V$, $*_n$ is defined by linearity. The algebra $A_n(V)$ is called \emph{the level $n$ Zhu algebra of $V$}.
\end{definition}

By Lemma 2.1 (i) in \cite{DLM}, we have \begin{proposition}\label{prop1}
Let $V$ be a VOA and $u\in V$ homogeneous, $v\in V$, and $m\ge k\ge 0$. Then 
\begin{equation}
\Res_x Y(u, x)v\frac{(1+x)^{\mathrm{wt}\, v+n+k}}{x^{2n+2+m}}\in O_n(V).
\end{equation}
\end{proposition}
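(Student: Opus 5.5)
This is Lemma 2.1(i) of \cite{DLM}. We read the exponent as $\mathrm{wt}\,u+n+k$, as in \cite{DLM}; with $\mathrm{wt}\,v$ it is not the natural statement. For homogeneous $u$, any $v$, and integers $m,k\ge 0$, write
\[
[u,v]_{m,k}=\Res_x Y(u,x)v\frac{(1+x)^{\mathrm{wt}\,u+n+k}}{x^{2n+2+m}} .
\]
We must show $[u,v]_{m,k}\in O_n(V)$ whenever $m\ge k\ge 0$. The plan has two steps: first reduce to $k=0$, then prove the case $k=0$ by induction on $m$, using the $L(-1)$-derivative property and integration by parts in the formal residue.

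\emph{Reduction to $k=0$.} Expand $(1+x)^{k}=\sum_{j=0}^k\binom{k}{j}x^j$. This gives
\[
[u,v]_{m,k}=\sum_{j=0}^k\binom{k}{j}[u,v]_{m-j,0}.
\]
Since $m-j\ge m-k\ge 0$ for every $j$, it suffices to show $[u,v]_{m,0}\in O_n(V)$ for all $m\ge 0$, all homogeneous $u$ and all $v$. The special case $k=1$ reads $[u,v]_{m+1,1}=[u,v]_{m+1,0}+[u,v]_{m,0}$, and this is the form used in the induction.

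\emph{Induction on $m$.} The base case $m=0$ holds because $[u,v]_{0,0}=u\circ_n v\in O_n(V)$ by definition. Now assume $[w,v]_{m,0}\in O_n(V)$ for all homogeneous $w$ and all $v$. Apply this hypothesis to $w=L(-1)u$, which is homogeneous of weight $\mathrm{wt}\,u+1$. Set $g(x)=(1+x)^{\mathrm{wt}\,u+n+1}x^{-2n-2-m}$. Then
\[
[L(-1)u,v]_{m,0}=\Res_x\Big(\tfrac{d}{dx}Y(u,x)v\Big)g(x).
\]
The residue of a total derivative vanishes, so this equals $-\Res_x Y(u,x)v\,g'(x)$. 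Computing $g'$ term by term gives
\[
[L(-1)u,v]_{m,0}=-(\mathrm{wt}\,u+n+1)[u,v]_{m,0}+(2n+2+m)[u,v]_{m+1,1}.
\]
The left side lies in $O_n(V)$ by the induction hypothesis, and so does $[u,v]_{m,0}$. Since $2n+2+m\neq 0$, it follows that $[u,v]_{m+1,1}\in O_n(V)$. Finally, $[u,v]_{m+1,0}=[u,v]_{m+1,1}-[u,v]_{m,0}\in O_n(V)$, which completes the induction.

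The only delicate step is the integration by parts. One must check that $\Res_x \frac{d}{dx}(\cdots)=0$ holds for the formal series $Y(u,x)v\,g(x)$. This is fine: $Y(u,x)v$ is lower truncated, and $g$ is a Laurent polynomial times $(1+x)^{N}$ expanded in nonnegative powers of $x$, so the product is a well-defined formal Laurent series. Equally important is that the induction is carried out over all homogeneous $u$ at once, so that the hypothesis may be applied to $L(-1)u$. In particular, the elements $(L(-1)+L(0))v$ are not needed for this statement.
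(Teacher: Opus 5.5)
Your proof is correct and is the standard argument for Lemma 2.1(i) of \cite{DLM}, which the paper only cites without proof: reduce to $k=0$ by expanding $(1+x)^k$, then induct on $m$ over all homogeneous $u$ at once, using $Y(L(-1)u,x)=\frac{d}{dx}Y(u,x)$ and the vanishing of residues of derivatives. Your reading of the exponent as $\mathrm{wt}\,u+n+k$ is also the right one, since with $\mathrm{wt}\,v$ the claim is false: in $M(1)$ with $u=\alpha(-1)\mathbf{1}$, $v=\mathbf{1}$, $n=m=k=0$, the expression is $\alpha(-2)\mathbf{1}\equiv-\alpha(-1)\mathbf{1}\not\equiv 0$ modulo $O_0(M(1))$.
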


As is stated in the proof of Proposition 2.4 in \cite{DLM}, an immediate consequence of Proposition \ref{prop1} is the following corollary.
\begin{corollary}\label{corZhu}Let $V$ be a VOA and $n\in \mathbb{N}$, $n>0$. Then \begin{equation}
O_{n}(V)\subset O_{n-1}(V).
\end{equation}
\end{corollary}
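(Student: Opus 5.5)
The plan is to check the inclusion on the spanning set of $O_n(V)$. Since $O_{n-1}(V)$ is a subspace, it suffices to show that each generator of $O_n(V)$ lies in $O_{n-1}(V)$. By the definition above, $O_n(V)$ is spanned by two kinds of elements:
\begin{enumerate}[label=(\alph*)]
\item elements $(L(-1)+L(0))v$ for $v\in V$;
\item elements $u\circ_n v$ for $u\in V$ homogeneous and $v\in V$.
\end{enumerate}

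Generators of type (a) need no argument. The definition of $O_{n-1}(V)$ (valid since $n-1\in\N$) includes $(L(-1)+L(0))v$ among its spanning elements for every $v\in V$.

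For generators of type (b), I will rewrite the exponents of $u\circ_n v$ in terms of $n-1$. Setting $n'=n-1$, we have $\mathrm{wt}\,u+n=\mathrm{wt}\,u+n'+1$ and $2n+2=2n'+2+2$. Therefore
\begin{equation*}
u\circ_n v=\Res_x Y(u,x)v\,\frac{(1+x)^{\mathrm{wt}\,u+n'+1}}{x^{2n'+2+2}}.
\end{equation*}
This is exactly the expression in Proposition \ref{prop1}, taken at level $n'$ with $k=1$ and $m=2$. The hypothesis $m\ge k\ge 0$ holds, so Proposition \ref{prop1} gives $u\circ_n v\in O_{n'}(V)=O_{n-1}(V)$. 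Combining (a) and (b), the spanning set of $O_n(V)$ lies in $O_{n-1}(V)$, and hence $O_n(V)\subset O_{n-1}(V)$.

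The only delicate point is bookkeeping in Proposition \ref{prop1}. As written there, the exponent involves $\mathrm{wt}\,v$, while the homogeneity assumption is on $u$. In \cite{DLM} (Lemma 2.1(i)) the exponent is $\mathrm{wt}\,u+n+k$, with $u$ the homogeneous element whose vertex operator appears. That is the form needed to match $u\circ_n v$, so I will use the proposition in that form, as it is used in the proof of Proposition 2.4 of \cite{DLM}.
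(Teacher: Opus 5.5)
Your proof is correct and follows the paper's approach. The paper simply calls the inclusion an immediate consequence of Proposition \ref{prop1}, i.e.\ DLM Lemma 2.1(i). You make that explicit by applying it at level $n-1$ with $k=1$, $m=2$, and you rightly note that the exponent there should read $\mathrm{wt}\,u$ rather than $\mathrm{wt}\,v$.
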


We also recall from the proof of Proposition 2.4 in \cite{DLM}:
\begin{proposition}\label{mult_equiv}
Let $V$ be a VOA and let $n\in \N$, $n>0$. Then given $u,v\in V$ \begin{equation}u*_{n}v\equiv u*_{n-1}v\mod O_{n-1}(V).\end{equation}
\end{proposition}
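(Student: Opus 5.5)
The plan is to compare the two products term by term and show that their difference is a combination of elements already known to lie in $O_{n-1}(V)$, either through Proposition \ref{prop1} (with $n$ replaced by $n-1$) or through the generators $u\circ_{n-1}v$. Fix homogeneous $u$ (extend by linearity). Write $f_n(x)=(1+x)^{\mathrm{wt}\,u+n}$. Then
\[
u*_n v=\sum_{m=0}^n(-1)^m\binom{m+n}{n}\Res_x \frac{(1+x)^{\mathrm{wt}\,u+n}Y(u,x)v}{x^{n+m+1}}.
\]
The first step is to expand $(1+x)^{\mathrm{wt}\,u+n}=(1+x)(1+x)^{\mathrm{wt}\,u+n-1}$. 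This rewrites each summand as
\[
\Res_x \frac{(1+x)^{\mathrm{wt}\,u+n-1}Y(u,x)v}{x^{n+m+1}}+\Res_x \frac{(1+x)^{\mathrm{wt}\,u+n-1}Y(u,x)v}{x^{n+m}}.
\]
As a result, every term in $u*_n v$ has the common prefactor $(1+x)^{\mathrm{wt}\,u+(n-1)}$, exactly as in $u*_{n-1}v$.

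The second step is to sort these terms by the power $x^{-(j+1)}$. Set $R_j=\Res_x (1+x)^{\mathrm{wt}\,u+n-1}Y(u,x)v\,x^{-j-1}$. Then
\[
u*_{n-1}v=\sum_{m=0}^{n-1}(-1)^m\binom{m+n-1}{n-1}R_{n-1+m}.
\]
The terms of $u*_n v$ are combinations of $R_{n-1},\dots,R_{2n}$. Any $R_j$ with $j\ge 2n-1$, which means the denominator power is at least $2(n-1)+2$, lies in $O_{n-1}(V)$. For $j=2n-1$ this is the generator $u\circ_{n-1}v$. For $j>2n-1$ it follows from Proposition \ref{prop1} with $k=0$ and level $n-1$. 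Note that Proposition \ref{prop1} is stated with $\mathrm{wt}\,v$ in the exponent, but the cited lemma in \cite{DLM} is for $\mathrm{wt}\,u$. I would use the $\mathrm{wt}\,u$ version, matching the definition of $\circ_n$.

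It therefore remains to show that, modulo these terms, the coefficients of $R_{n-1},\dots,R_{2n-2}$ in $u*_n v$ agree with those in $u*_{n-1}v$. The coefficient of $R_{n-1+m}$ in $u*_n v$ for $0\le m\le n-1$ is
\[
(-1)^m\binom{m+n}{n}+(-1)^{m+1}\binom{m+1+n}{n}=(-1)^{m+1}\binom{m+n}{n-1}.
\]
This identity is Pascal's rule. Comparing it with $(-1)^m\binom{m+n-1}{n-1}$ shows a mismatch, so the bookkeeping needs care. The main obstacle is exactly this combinatorial matching. If the naive comparison fails, I would additionally use Proposition \ref{prop1} with $k\ge1$. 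That lets me trade $R_j$ for terms with higher powers of $(1+x)$, giving relations among the $R_j$ modulo $O_{n-1}(V)$ with $j$ near $2n-2$.

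Concretely, I would first try to prove the finite identity directly: reduce the claim to the vanishing of $\sum_m c_m R_{n-1+m}$ modulo $O_{n-1}(V)$ for the difference coefficients $c_m$. Then I would show that this vector of coefficients lies in the span of the relations supplied by Proposition \ref{prop1} and the $\circ_{n-1}$ generators. The cleanest check is via generating functions in $x$: the products correspond to truncations of $(1+x)^{-n-1}$ and $(1+x)^{-n}$ up to order $x^{n}$ and $x^{n-1}$ respectively. Their difference, multiplied by the extra factor $(1+x)$, should vanish to the required order $x^{2n-1}$ after accounting for the terms killed by Proposition \ref{prop1}.
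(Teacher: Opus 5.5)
Your setup is the right one. It is the argument of \cite{DLM} that the paper recalls here, and that it imitates in its proof of Proposition \ref{mult}: split $(1+x)^{\mathrm{wt}\,u+n}=(1+x)^{\mathrm{wt}\,u+n-1}(1+x)$, discard every $R_j$ with $j\ge 2n-1$ modulo $O_{n-1}(V)$, and match coefficients with Pascal's rule. You are also correct that Proposition \ref{prop1} should have $\mathrm{wt}\,u$, not $\mathrm{wt}\,v$, in the exponent. However, you stop at a spurious ``mismatch'' and replace the decisive step with a tentative plan, so as written the proposal does not prove the statement. The mismatch comes from an indexing error. The summand $\Res_x (1+x)^{\mathrm{wt}\,u+n-1}Y(u,x)v\,x^{-(n+m'+1)}$ from the first half of the split is $R_{n+m'}$. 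It therefore contributes to $R_{n-1+m}$ when $m'=m-1$, not when $m'=m+1$; the latter term is $R_{n+m+1}$. The identity you wrote is true, but it combines the wrong pair of terms.

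With the correct pairing, for $0\le m\le n-1$ the coefficient of $R_{n-1+m}$ in $u*_nv$ is
\[
(-1)^m\binom{m+n}{n}+(-1)^{m-1}\binom{m+n-1}{n}=(-1)^m\binom{m+n-1}{n-1}.
\]
The second term is absent for $m=0$, where the coefficient is $\binom{n}{n}=1=\binom{n-1}{n-1}$. This is exactly the coefficient of $R_{n-1+m}$ in $u*_{n-1}v$. The only leftover terms are $R_{2n-1}$ (from $m'=n-1$ and from $m=n$) and $R_{2n}$ (from $m'=n$). Both lie in $O_{n-1}(V)$ by the argument you already gave, so no further relations from Proposition \ref{prop1} with $k\ge 1$ are needed. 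Your generating-function idea would also have caught the error: $(1+x)\cdot(1+x)^{-n-1}=(1+x)^{-n}$ shows that the coefficients of $x^0,\dots,x^{n-1}$ agree. You would need to carry out that check rather than leave it as a hope.
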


Corollary \ref{corZhu} and Proposition \ref{mult_equiv} together imply the following result, which is Proposition 2.4 in \cite{DLM}: 
\begin{proposition}Let $V$ be a VOA and $n\in \N$, $n>0$. Then the map from $A_n(V)$ to $A_{n-1}(V)$ given by $v+O_{n}(V)\mapsto v+O_{n-1}(V)$ is a surjective algebra homomorphism. \end{proposition}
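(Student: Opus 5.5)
The statement is Proposition 2.4 of \cite{DLM}. We need to show that the map $A_n(V)\to A_{n-1}(V)$, $v+O_n(V)\mapsto v+O_{n-1}(V)$, is a surjective algebra homomorphism. The plan is to check three things in order: the map is well defined, it is surjective, and it is multiplicative.

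First, well-definedness. The underlying map is the identity on $V$ followed by passage to a quotient, so it descends from $V/O_n(V)$ to $V/O_{n-1}(V)$ exactly when $O_n(V)\subset O_{n-1}(V)$. This inclusion is Corollary \ref{corZhu}, which we invoke directly. The same observation gives linearity. Surjectivity is immediate, since every class $v+O_{n-1}(V)$ is the image of $v+O_n(V)$.

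Next, multiplicativity. Take $u,v\in V$. Since $*_n$ is bilinear, we may take $u$ homogeneous. The image of $(u+O_n(V))*_n(v+O_n(V))=u*_nv+O_n(V)$ is $u*_nv+O_{n-1}(V)$. By Proposition \ref{mult_equiv} this equals $u*_{n-1}v+O_{n-1}(V)$, which is the product $(u+O_{n-1}(V))*_{n-1}(v+O_{n-1}(V))$ in $A_{n-1}(V)$. Here we use that $*_n$ and $*_{n-1}$ are well defined on their respective quotients, which is part of the definition of $A_n(V)$ as an associative algebra. The vacuum $\mathbf{1}$ represents the unit in both algebras, so the map is unital as well.

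There is no real obstacle, since both substantive inputs, Corollary \ref{corZhu} and Proposition \ref{mult_equiv}, are stated earlier and may be assumed. If one wanted to reprove them, the hard step would be Proposition \ref{mult_equiv}. For that, the approach would be to expand $u*_nv-u*_{n-1}v$ as residues of $(1+x)^{\mathrm{wt}\,u+n-j}Y(u,x)v/x^{k}$, using $(1+x)^{\mathrm{wt}\,u+n}=(1+x)^{\mathrm{wt}\,u+n-1}(1+x)$. One would then recombine the binomial coefficients so that every surviving term has $x$-power at least $2(n-1)+2$ in the denominator, and apply Proposition \ref{prop1} at level $n-1$ to place those terms in $O_{n-1}(V)$.
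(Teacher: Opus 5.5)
Your proposal is correct and follows the paper's route exactly. The paper deduces the statement from the inclusion $O_n(V)\subset O_{n-1}(V)$ of Corollary \ref{corZhu}, which gives well-definedness (and surjectivity is trivial), together with the congruence $u*_nv\equiv u*_{n-1}v$ modulo $O_{n-1}(V)$ of Proposition \ref{mult_equiv}, which gives multiplicativity.
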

\subsection{Huang Algebras}

We next review the construction of the Huang algebra $A^\infty(V)$ of a grading-restricted vertex algebra $V$. This material is from \cite{H3} and \cite{H2} but we restate it here for the convenience of the reader.

Define $U^\infty(\mathbb{C})$ to be the space of all column-finite infinite matrices with entries in $\mathbb{C}$, doubly indexed by $\N$. In other words, $U^\infty(\mathbb{C})$ consists of all matrices $[a_{kl}]$ where $a_{kl}\in \mathbb{C}$, $k, l\in \mathbb{N}$ and for each fixed $l\in \mathbb{N}$, only finitely many $a_{kl}$ are nonzero. Elements of $U^\infty(\mathbb{C})$ are infinite linear combinations of the elementary matrices $E_{kl}$, where $E_{kl}$ denotes the matrix with all entries equal to $0$ except for the $k,l$ position whose entry is $1$. Given $A=\sum_{k,l\in \mathbb{N}}a_{kl}E_{kl}$ and $B=\sum_{k,l\in \mathbb{N}}b_{kl}E_{kl}$, $a_{kl},b_{kl}\in \C,$
\begin{equation}
AB=\sum_{k,n\in \mathbb{N}}\big(\sum_{m\in \mathbb{N}}a_{km}b_{mn}\big)E_{kn}.
\end{equation}The column-finiteness requirement ensures that this is a well-defined product.

Now, let $U^\infty(V)=V\otimes U^\infty(\mathbb{C})$. Denote $v\otimes E_{kl}$ by $[v]_{kl}$.

Next, define a product $\diamond$ on $U^\infty(V)$ as follows: Given $u, v\in V$ and $k, l, m, n\in \mathbb{N}$, if $m\ne n,$
\begin{equation*}
[u]_{km}\diamond[v]_{nl}=0
\end{equation*}and if $m=n,$
\begin{equation}\label{diamond}
[u]_{kn}\diamond[v]_{nl}=\sum_{m=0}^n\binom{-k+n-l-1}{m}\Res_xx^{-k+n-l-m-1}(1+x)^l[Y((1+x)^{L(0)}u, x)v]_{kl}. 
\end{equation}Extend this product linearly to a product on $U^\infty(V)$.

\begin{remark}When $k=n=l$, the product $\diamond$ coincides with the product $*_n$ defined in \cite{DLM}. That is, $[u]_{nn}\diamond[v]_{nn}=[u*_nv].$
\end{remark}

We next review, given a grading-restricted vertex algebra $V$, the associated graded space $\mathrm{Gr}(W)$ of an ascending filtration of a lower-bounded generalized $V$-module $W$ \cite{H2}.

Let $W$ be a lower-bounded generalized $V$-module. For $n\in \mathbb{N}$, define
\begin{eqnarray*}
\Omega_n(W)=\{w\in W|v_kw=0{\text{ for homogeneous }v\in V{\text{ and }\text{wt }v-k-1<-n}}\}. 
\end{eqnarray*}We have $\Omega_n\subset \Omega_m$ for $n\le m$ and since $W$ is a lower-bounded generalized $V$-module, $W=\bigcup_{n\in\mathbb{N}}\Omega_n(W),$ so $\{\Omega_n(W)\}_{n\in \N}$ is an ascending filtration of $W$.

Define \[\mathrm{Gr}_n(W)=\Omega_n(W)/\Omega_{n-1}(W)\] and \[\mathrm{Gr}(W)=\displaystyle\oplus_{n\in \mathbb{N}}\mathrm{Gr}_n(W).\] Let $[w]_n$ denote $w+\Omega_{n-1}(W)\in \mathrm{Gr}_n(W),$ where $w\in\Omega_n(W)$. 

Given $\mathfrak{v}=[v]_{kl}$ and $\mathfrak{w}=[w]_n$, where $v\in V$, $w\in W$, and $k,l,n\in \mathbb{N}$, define
\begin{equation}
\vartheta_{\mathrm{Gr}(W)}([v]_{kl})[w]_n=\delta_{ln}[\Res_xx^{l-k-1}Y_W(x^{L(0)}v, x)w]_k.
\end{equation}
Extend this definition linearly to obtain the definition of $\vartheta_{\mathrm{Gr}(W)}(\mathfrak{v})\mathfrak{w}$ for arbitrary elements $\mathfrak{v}\in U^\infty(V)$ and $\mathfrak{w}\in \mathrm{Gr}(W)$.

By Theorem 2.4 in \cite{H2}, we have

\begin{theorem}
Given a grading-restricted vertex algebra $V$ and a lower-bounded generalized $V$-module, the linear map 
\begin{equation}
\vartheta_{\mathrm{Gr}(W)}:U^\infty(V)\rightarrow \End \mathrm{Gr}(W),
\end{equation}given by 
\begin{equation*}
\mathfrak{v}\mapsto \vartheta_{\mathrm{Gr}(W)}(\mathfrak{v}),
\end{equation*} gives a $U^\infty(V)$-module structure on $\mathrm{Gr}(W).$ In other words, $\vartheta_{\mathrm{Gr}(W)}$ is a homomorphism of algebras from $U^\infty(V)$ to $\End \mathrm{Gr}(W).$
\end{theorem}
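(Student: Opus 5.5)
The proof has two parts. First I check that $\vartheta_{\mathrm{Gr}(W)}([v]_{kl})$ is a well-defined operator $\mathrm{Gr}_l(W)\to\mathrm{Gr}_k(W)$. Then I check multiplicativity with respect to $\diamond$ on the spanning elements $[u]_{kn}$, $[v]_{nl}$; for $m\neq n$ both sides of the identity vanish by the $\delta_{ln}$ in the definition. Throughout I reduce to homogeneous $v$, where $x^{L(0)}v=x^{\mathrm{wt}\,v}v$. Then
\[\Res_x x^{l-k-1}Y_W(x^{L(0)}v,x)w=v_{\mathrm{wt}\,v+l-k-1}w,\]
which is the unique mode of $v$ of \emph{degree} $\deg v_j:=\mathrm{wt}\,v-j-1$ equal to $k-l$. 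I use the convention $\Omega_n(W)=0$ for $n<0$. Note that $\deg$ is additive on iterates: $\deg (u_iv)_{r}=\deg u_p+\deg v_j$ whenever $r=p+j-i$.

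\emph{Well-definedness.} I prove the following claim: a mode of degree $d$ maps $\Omega_l(W)$ into $\Omega_{l+d}(W)$, for every $l\in\N$. This gives both that $v_j\Omega_l\subset\Omega_k$ and that $v_j\Omega_{l-1}\subset\Omega_{k-1}$, so the map descends to the quotients.

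I argue by induction on $l$. Take $w\in\Omega_l(W)$ and homogeneous $u$ with $\deg u_p<-(l+d)$. The commutator formula, which is the case $l=0$ of the Jacobi identity, gives
\[u_pv_jw=v_ju_pw+\sum_{i\ge0}\binom{p}{i}(u_iv)_{p+j-i}w.\]
Each summand $(u_iv)_{p+j-i}w$ has degree $\deg u_p+d<-l$, so it vanishes because $w\in\Omega_l$. For the first term, $u_pw$ has degree $\deg u_p$ and lies in $\Omega_{l+\deg u_p}$, where $l+\deg u_p<l$. This holds either trivially, when $\deg u_p<-l$ and so $u_pw=0$, or otherwise by the inductive hypothesis. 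Applying the inductive hypothesis once more, $v_ju_pw\in\Omega_{l+\deg u_p+d}$, and this index is negative, so the term is $0$.

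\emph{Homomorphism.} Fix $w\in\Omega_l(W)$ and homogeneous $u,v$. The composite $\vartheta([u]_{kn})\vartheta([v]_{nl})[w]_l$ equals $[u_{p_0}v_{j_0}w]_k$, where $\deg u_{p_0}=k-n$ and $\deg v_{j_0}=n-l$. On the other side, expanding $(1+x)^{L(0)}u$ and $(1+x)^l$ in formula \eqref{diamond} writes $[u]_{kn}\diamond[v]_{nl}$ as a finite combination of $[u_iv]_{kl}$. Applying $\vartheta$ turns each of these into an iterate mode $(u_iv)_{r}w$ of total degree $k-l$.

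I then rewrite each such iterate with the Jacobi identity, choosing $m$, $l'$, $n'$ so that the products $u_{m+l'-i}v_{n'+i}$ run over the degree splittings $(k-n+s)+(n-l-s)$. In the resulting expression, any term $v_{\ast}u_{\ast}w$ whose $u$-mode has degree $<-l$ kills $w$. By the claim, the remaining terms land in $\Omega_{k-1}$ whenever the $u$-mode has degree other than $k-n$. The analogous statement holds for products $u_{\ast}v_{\ast}w$ whose $v$-mode has degree $>n-l$, since the $u$-mode then has degree $<k-n$. So modulo $\Omega_{k-1}(W)$, only the single product $u_{p_0}v_{j_0}w$ should survive.

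\emph{Expected obstacle.} The main difficulty is the binomial bookkeeping. I must show that the coefficients $\binom{-k+n-l-1}{m}$, summed only over $0\le m\le n$, combine with the Jacobi-identity coefficients so that $u_{p_0}v_{j_0}w$ appears with coefficient exactly $1$. I must also show that the truncation at $m=n$ discards only terms that already lie in $\Omega_{k-1}$ or vanish on $\Omega_l$. I would try this first for $k=n=l$, where $\diamond$ is $*_n$ and the statement reduces to the known fact that $o(u*_nv)=o(u)o(v)$ on $\Omega_n/\Omega_{n-1}$ from \cite{DLM}. I would then generalize by tracking the generating function $\Res_x x^{-k+n-l-m-1}(1+x)^{l+\mathrm{wt}\,u}$ against the $(x_1-x_2)$-expansions in the Jacobi identity.
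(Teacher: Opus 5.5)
The paper does not prove this statement itself; it quotes it as Theorem 2.4 of \cite{H2}. Your argument therefore has to stand on its own, and as written it has two genuine gaps.

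\emph{Well-definedness.} Your induction on $l$ is circular. Consider the case $-l\le\deg u_p<0$. There you assert $u_pw\in\Omega_{l+\deg u_p}$ for $w\in\Omega_l$ ``by the inductive hypothesis''. But this is the claim itself at the \emph{same} level $l$, since the source is still $\Omega_l$, so induction on $l$ does not supply it. The commutator formula by itself only swaps the two modes, so it cannot reduce this to a smaller instance either. What breaks the loop is the associativity content of the Jacobi identity. Take $m+l'=p$, $n'=j$ and $m\ge\mathrm{wt}\,u+l$, so that $u_{m+i}w=0$ for all $i\ge0$. The iterate side has total degree $\deg u_p+\deg v_j<-l$, so it also kills $w$. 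What remains is $\sum_{i\ge0}(-1)^i\binom{p-m}{i}u_{p-i}v_{j+i}w=0$. For $i\ge1$ the mode $v_{j+i}$ has smaller degree than $v_j$. So induction on $\deg v_j$, upward from $-l-1$, gives $u_pv_jw=0$ whenever $\deg u_p+\deg v_j<-l$, which is your claim.

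\emph{Multiplicativity.} Here the mechanism you describe is false, and the step you defer is the whole proof. Your filtration claim only puts $u\langle k-n+s\rangle\, v\langle n-l-s\rangle\,\Omega_l$ inside $\Omega_k$, whatever $s$ is; here $u\langle e\rangle$ denotes the mode of $u$ of degree $e$. It never gives $\Omega_{k-1}$, and such terms are genuinely nonzero in $\mathrm{Gr}_k(W)$. For example, take $V=M(1)$, $u=v=\alpha(-1)\mathbf{1}$ and $k=n=l=1$. The $s=1$ term $\alpha(-1)\alpha(1)$ fixes $\alpha(-1)\mathbf{1}\otimes w_0$, which is not in $\Omega_0(W)$. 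So the other splittings must appear with coefficient exactly $0$, and they do.

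Apply the Jacobi identity with $m=\mathrm{wt}\,u+l$ to each $[u_iv]_{kl}$ term of $[u]_{kn}\diamond[v]_{nl}$; this choice makes every $v_\ast u_{m+i}w$ vanish. Set $A=-k+n-l-1$ and use $\binom{A}{m}\binom{A-m}{s-m}=\binom{A}{s}\binom{s}{m}$. You get
\[
\vartheta([u]_{kn}\diamond[v]_{nl})[w]_l=\sum_{s\ge0}\binom{A}{s}\Bigl(\sum_{m=0}^{\min(s,n)}(-1)^{s-m}\binom{s}{m}\Bigr)[u\langle k-n+s\rangle v\langle n-l-s\rangle w]_k .
\]
For $s\le n$ the inner sum is $(1-1)^s=\delta_{s,0}$. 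For $s>n$ the $v$-mode has degree $n-l-s<-l$ and kills $w$. Thus the truncation $m\le n$ in $\diamond$ matches exactly the range where $v$-modes survive on $\Omega_l$. The result is $[u\langle k-n\rangle v\langle n-l\rangle w]_k=\vartheta([u]_{kn})\vartheta([v]_{nl})[w]_l$, and the underlying vectors already agree in $W$, not just modulo $\Omega_{k-1}(W)$.
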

Define $Q^\infty(V)$ to be the intersection of $\ker\vartheta_{\mathrm{Gr}(W)}$ for all lower-bounded generalized $V$-modules $W$ and define $A^\infty(V)=U^\infty(V)/Q^\infty(V)$. The following theorem is Theorem 2.8 in \cite{H2}:
\begin{theorem}Given a grading-restricted vertex algebra $V$, the product $\diamond$ on $U^\infty(V)$ induces a product on $A^\infty(V)$, also denoted by $\diamond$, such that $A^\infty(V)$ equipped with $\diamond$ is an associative algebra. Moreover, the associated graded space $\mathrm{Gr}(W)$ of the ascending filtration $\{\Omega_n(W)\}_{n\in \N}$ of any lower-bounded generalized $V$-module $W$ is an $A^\infty(V)$-module, where the action is the one induced from the action of $U^\infty(V)$ on $\mathrm{Gr}(W).$
\end{theorem}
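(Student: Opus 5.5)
The plan is to deduce everything from the preceding theorem (Theorem 2.4 of \cite{H2}), namely that for every lower-bounded generalized $V$-module $W$ the map $\vartheta_{\mathrm{Gr}(W)}:U^\infty(V)\rightarrow \End \mathrm{Gr}(W)$ is an algebra homomorphism from $(U^\infty(V),\diamond)$ to the associative algebra $\End\mathrm{Gr}(W)$. I will not use the explicit formula (\ref{diamond}) at all; all the real computation is contained in that theorem.

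\emph{Step 1: $Q^\infty(V)$ is a two-sided ideal for $\diamond$.} Let $\mathfrak{q}\in Q^\infty(V)$ and $\mathfrak{u}\in U^\infty(V)$. For each $W$ the homomorphism property gives
\begin{equation*}
\vartheta_{\mathrm{Gr}(W)}(\mathfrak{u}\diamond\mathfrak{q})=\vartheta_{\mathrm{Gr}(W)}(\mathfrak{u})\,\vartheta_{\mathrm{Gr}(W)}(\mathfrak{q})=0 .
\end{equation*}
The same argument gives $\vartheta_{\mathrm{Gr}(W)}(\mathfrak{q}\diamond\mathfrak{u})=0$. So both products lie in every kernel, and hence in $Q^\infty(V)$. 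Here I only use that the kernel of a homomorphism is a two-sided ideal, which holds even though $\diamond$ is not known to be associative on $U^\infty(V)$. Since $Q^\infty(V)$ is a subspace and $\diamond$ is bilinear, $\diamond$ descends to a well-defined product on $A^\infty(V)=U^\infty(V)/Q^\infty(V)$.

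\emph{Step 2: associativity.} For $\mathfrak{a},\mathfrak{b},\mathfrak{c}\in U^\infty(V)$, set
\begin{equation*}
\mathfrak{r}=(\mathfrak{a}\diamond\mathfrak{b})\diamond\mathfrak{c}-\mathfrak{a}\diamond(\mathfrak{b}\diamond\mathfrak{c}).
\end{equation*}
Applying $\vartheta_{\mathrm{Gr}(W)}$ and using the homomorphism property twice, $\vartheta_{\mathrm{Gr}(W)}(\mathfrak{r})$ becomes the difference of the two bracketings of the composites of $\vartheta_{\mathrm{Gr}(W)}(\mathfrak{a})$, $\vartheta_{\mathrm{Gr}(W)}(\mathfrak{b})$ and $\vartheta_{\mathrm{Gr}(W)}(\mathfrak{c})$ in $\End\mathrm{Gr}(W)$. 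This difference vanishes because composition of endomorphisms is associative. Since $W$ is arbitrary, $\mathfrak{r}\in Q^\infty(V)$, so the induced product on $A^\infty(V)$ is associative.

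One could phrase Step 2 as an embedding of $A^\infty(V)$ into a product of the $\End\mathrm{Gr}(W)$. That formulation runs into the fact that lower-bounded generalized modules form a class, not a set. Arguing elementwise, as above, avoids this issue entirely.

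\emph{Step 3: the module structure.} Since $Q^\infty(V)\subset\ker\vartheta_{\mathrm{Gr}(W)}$, each $\vartheta_{\mathrm{Gr}(W)}$ factors through a linear map $A^\infty(V)\rightarrow\End\mathrm{Gr}(W)$. This map is multiplicative because $\vartheta_{\mathrm{Gr}(W)}$ is, so $\mathrm{Gr}(W)$ becomes an $A^\infty(V)$-module with the induced action.

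The only point needing care is that elements of $U^\infty(V)$ are infinite sums, so the linear extension of $\vartheta_{\mathrm{Gr}(W)}$ and of $\diamond$ must make sense. This is a routine check from column-finiteness: for $[w]_n$ only the finitely many entries in column $n$ act, and the product only involves finite sums in each entry. I expect no real obstacle beyond this bookkeeping, since the substantive content is already contained in Theorem 2.4 of \cite{H2}.
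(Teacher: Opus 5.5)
Your proposal is correct. The paper gives no proof of its own for this statement; it quotes it as Theorem 2.8 of \cite{H2}. Your argument is the standard formal deduction from the homomorphism property of $\vartheta_{\mathrm{Gr}(W)}$ (Theorem 2.4 of \cite{H2}): $Q^\infty(V)$ is a two-sided ideal as an intersection of kernels, the associator lies in every kernel because composition in $\End \mathrm{Gr}(W)$ is associative, and each $\vartheta_{\mathrm{Gr}(W)}$ factors through $A^\infty(V)$.
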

\section{The Diagonal Shift Property}\label{diagonal shift}

In this section we introduce the diagonal shift property. Given a grading-restricted vertex algebra $V$, we review the subspace $O^\infty(V)$ defined by Huang, along with the Jacobi identity elements of $Q^\infty(V)$ also introduced by Huang \cite{H3,H2}. We then prove that elements of $O^\infty(V)$ and the Jacobi identity elements satisfy the diagonal shift property.

\begin{definition}Let $V$ be a grading-restricted vertex algebra. Let $v\in V$ and $k,l\in \Z_+$ such that $[v]_{kl}\in Q^\infty(V)$. We say that $[v]_{kl}$ satisfies the diagonal shift property if $[v]_{k-1,l-1}\in Q^\infty(V)$.
\end{definition}

\begin{definition}\cite{H2}
Given a grading-restricted vertex algebra $V$, define $O^\infty(V)$ to be the space spanned by infinite linear combinations of
\begin{equation}\label{circ_prod}
\Res_xx^{-k-l-p-2}(1+x)^l[Y((1+x)^{L(0)}u, x)v]_{kl}
\end{equation} for $u, v\in V$ and $k, l, p\in \mathbb{N}$ and elements of the form
\begin{equation}\label{L}
[(L(-1)+L(0)+l-k)v]_{kl}
\end{equation}for $v\in V$ and $k, l\in \mathbb{N}$, with each pair $(k,l)$ in the linear combinations appearing only finitely many times. 
\end{definition} Define $O^\infty_\circ(V)$ to be the space spanned by infinite linear combinations of
\begin{equation}\label{circ_prod}
\Res_xx^{-k-l-p-2}(1+x)^l[Y((1+x)^{L(0)}u, x)v]_{kl}
\end{equation} for $u, v\in V$ and $k, l, p\in \mathbb{N}$, with each pair $(k,l)$ in the linear combinations appearing only finitely many times.

The following is a consequence of Proposition 4.4 of \cite{H3} with a modification of the constraints on the integers $k,l,p,n\in \mathbb{Z}$. Since the integers allowed here are more general than those given in \cite{H3}, we include a proof. 
\begin{proposition}
Let $V$ be a grading-restricted vertex algebra. Then for $k\in \mathbb{N}$ and $l, p, n\in \mathbb{Z}$ such that $l+p\in \mathbb{N}$, and $u,v\in V$ with $v$ homogeneous, the following is in $Q^\infty(V)$. \begin{eqnarray}\label{J_elements}
\lefteqn{\sum_{\substack{j\in \mathbb{N}\\n+p-j\ge 0}}(-1)^j\binom{p}{j}[v]_{k, n+p-j}\diamond[u]_{n+p-j, l+p}}\nonumber\\&&-\sum_{\substack{j\in \mathbb{N}\\l-n+k+p-j\ge 0}}(-1)^{p-j}\binom{p}{j}[u]_{k, l-n+k+p-j}\diamond[v]_{l-n+k+p-j, l+p}\nonumber\\ &&-\sum_{j\in \mathbb{N}}\binom{\mathrm{wt}\,v+n-k-1}{j}[v_{p+j}u]_{k, l+p}.
\end{eqnarray}
\end{proposition}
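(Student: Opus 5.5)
The plan is to check the defining condition of $Q^\infty(V)$ directly. I will show that the displayed element acts as zero under $\vartheta_{\mathrm{Gr}(W)}$ for every lower-bounded generalized $V$-module $W$. The tools are the fact that $\vartheta_{\mathrm{Gr}(W)}$ is an algebra homomorphism (Theorem 2.4 of \cite{H2}, recalled above) and the Jacobi identity in $W$. By linearity in $u$ I may assume $u$ is homogeneous as well. Every term of the element has the form $[\,\cdot\,]_{k,l+p}$ or is a $\diamond$-product ending in column $l+p$. So it suffices to evaluate on $[w]_{l+p}$ with $w\in\Omega_{l+p}(W)$; on all other graded pieces every term acts as zero trivially.

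First I record the elementary degree lemma. For homogeneous $a\in V$ and $w\in\Omega_m(W)$, one has $a_s w\in\Omega_{m+\mathrm{wt}\,a-s-1}(W)$. This follows from the commutator formula (the $m=0$ case of the Jacobi identity) together with the $L(0)$-bracket formula. I also set the convention that $\Omega_N(W)=0$ for $N<0$, which is forced by the definition: take $v=\mathbf{1}$ and $k=-1$, so that $\mathbf{1}_{-1}w=w$ must vanish. Next, using $x^{L(0)}v=x^{\mathrm{wt}\,v}v$, I compute
\[
\vartheta([v]_{k,r})\,\vartheta([u]_{r,l+p})[w]_{l+p}=\big[v_{\mathrm{wt}\,v+r-k-1}\,u_{\mathrm{wt}\,u+l+p-r-1}w\big]_k .
\]
Substituting $r=n+p-j$ gives $v_{m+p-j}u_{n'+j}w$, where $m=\mathrm{wt}\,v+n-k-1$ and $n'=\mathrm{wt}\,u+l-n-1$. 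The analogous computation with $r=l-n+k+p-j$ gives $u_{n'+p-j}v_{m+j}w$ for the second sum. For the third sum, $\vartheta([v_{p+j}u]_{k,l+p})[w]_{l+p}$ is the class of $(v_{p+j}u)_{m+n'-j}w$, because $\mathrm{wt}(v_{p+j}u)=\mathrm{wt}\,v+\mathrm{wt}\,u-p-j-1$.

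Comparing with the module Jacobi identity for $a=v$, $b=u$, with parameters $(l,m,n)\mapsto(p,m,n')$, the three sums match the three terms of the Jacobi identity term by term. The binomial coefficients also match: $\binom{p}{j}$ with signs $(-1)^j$ and $(-1)^{p+j}$, and $\binom{\mathrm{wt}\,v+n-k-1}{j}$. The Jacobi identity holds for all integers $p$, which is why $l,p,n$ are allowed to range over $\Z$. When $p<0$ the binomial sums are genuinely infinite, and lower truncation on $w$ makes them finite.

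The main obstacle is the truncation of the index sets. The Jacobi identity sums over all $j\in\N$, whereas the displayed element keeps only the $j$ with $n+p-j\ge 0$ (resp. $l-n+k+p-j\ge0$), since matrix indices must lie in $\N$. I will handle this with the degree lemma. The missing terms factor through $u_{\mathrm{wt}\,u+l+p-r-1}w\in\Omega_r(W)$ with $r<0$, which is zero by the convention above, and the same argument applies to $v_{\mathrm{wt}\,v+r-k-1}w$ in the second sum. Hence the full Jacobi identity, applied to $w$ and projected to $\mathrm{Gr}_k(W)$, is exactly $\vartheta_{\mathrm{Gr}(W)}$ of the displayed element applied to $[w]_{l+p}$, and it vanishes. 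I also need to check that each term indeed lands in $\Omega_k(W)$, so that taking the class in $\mathrm{Gr}_k$ is legitimate; this again follows from the degree lemma. The hypothesis $l+p\in\N$ is what makes $[w]_{l+p}$ meaningful in the first place.
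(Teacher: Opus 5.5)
Your proposal is correct and follows the paper's proof essentially step by step. You evaluate on $[w]_{l+p}$ using that $\vartheta_{\mathrm{Gr}(W)}$ is a homomorphism, you recognize the three sums as the three terms of the module Jacobi identity for $(v,u)$ with parameters $(p,\ \mathrm{wt}\,v+n-k-1,\ \mathrm{wt}\,u+l-n-1)$, and you discard the out-of-range $j$ because the inner mode then has operator weight below $-(l+p)$ and so kills $w\in\Omega_{l+p}(W)$. The paper reads off that vanishing directly from the definition of $\Omega_{l+p}(W)$, and that is all your route through the degree lemma and $\Omega_r(W)=0$ for $r<0$ actually uses. Be aware that your sketched proof of the degree lemma by the commutator formula alone only handles modes with non-negative shift; weight-lowering modes need the full Jacobi identity. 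The lemma is in any case part of the well-definedness of $\vartheta_{\mathrm{Gr}(W)}$ quoted from Huang, so your argument does not depend on that sketch.
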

\begin{proof} We may assume $u$ is homogeneous. The general result will follow by linearity. Let $W$ be a lower-bounded generalized $V$-module and let $[w]_{l+p}\in \mathrm{Gr}_{l+p}(W)$. Then \begin{eqnarray*}
\lefteqn{\vartheta_{Gr(W)}\left(\sum_{\substack{j\in \mathbb{N}\\n+p-j\ge 0}}(-1)^j\binom{p}{j}[v]_{k, n+p-j}\diamond [u]_{n+p-j, l+p}\right)[w]_{l+p}}\\&=&\sum_{\substack{j\in \mathbb{N}\\n+p-j\ge 0}}(-1)^j\binom{p}{j}\vartheta_{Gr(W)}\left([v]_{k, n+p-j}\right)\vartheta_{Gr(W)}\left([u]_{n+p-j, l+p}\right)[w]_{l+p}\\&=&\sum_{\substack{j\in \mathbb{N}\\n+p-j\ge 0}}(-1)^j\binom{p}{j}\vartheta_{Gr(W)}\left([v]_{k, n+p-j}\right)[\Res_xx^{l+p-(n+p-j)-1}Y_W(x^{L(0)}u, x)w]_{n+p-j}\\&=&\sum_{\substack{j\in \mathbb{N}\\n+p-j\ge 0}}(-1)^j\binom{p}{j}\vartheta_{Gr(W)}\left([v]_{k, n+p-j}\right)[u_{\text{wt }u-1+l-n+j}w]_{n+p-j}\\&=&\sum_{\substack{j\in \mathbb{N}\\n+p-j\ge 0}}(-1)^j\binom{p}{j}[\Res_xx^{n+p-j-k-1}Y_W(x^{L(0)}v, x)u_{\text{wt }u-1+l-n+j}w]_k\\&=&\sum_{\substack{j\in \mathbb{N}\\n+p-j\ge 0}}(-1)^j\binom{p}{j}[v_{\text{wt }v-1+n+p-j-k}u_{\text{wt }u-1+l-n+j}w]_k.
\end{eqnarray*}
Next observe that since $[w]_{l+p}\in \mathrm{Gr}_{l+p}(W)=\Omega_{l+p}/\Omega_{l+p-1}(W)$, $[w]_{l+p}=w+\Omega_{l+p-1}(W)$, where $w\in \Omega_{l+p}(W)=\{w\in W|v_k=0{\text{ for wt }v-k-1<-l-p}\}$. Note that since $n-l-j=n-j+p-l-p$, if $n-j+p<0$, then $n-l-j<-l-p$. But $n-l-j$ is the weight of $u_{\text{wt }u-1+l-n+j}$ as an operator. Therefore, $u_{\text{wt }u-1+l-n+j}w=0$ when $n-j+p<0$ and the right hand side of the above expression is equal to 
\begin{eqnarray}\label{Jacobi}
\lefteqn{\sum_{{j\in \mathbb{N}}}(-1)^j\binom{p}{j}[v_{\text{wt }v-1+n+p-j-k}u_{\text{wt }u-1+l-n+j}w]_k}\nonumber\\&=&\sum_{j\in \mathbb{N}} (-1)^{p-j}\binom{p}{j}[u_{\text{wt }u-1+l-n+p-j}v_{\text{wt }v-1+n-k+j}w]_k\nonumber\\&+&\sum_{j\in \mathbb{N}}\binom{\text{wt }v+n-k-1}{j}[(v_{p+j}u)_{\text{wt }u+\text{wt }v+l-k-2-j}w]_k,
\end{eqnarray}where the equality in Equation \eqref{Jacobi} follows from the Jacobi identity for modules.

If $l-n+k+p-j<0$, then $\text{wt }v-(\text{wt }v-1+n-k+j)-1=k-n-j=l-n+k+p-j-l-p<-l-p,$ so in this case, $v_{\text{wt }v-1+n-k+j}w=0$ and therefore the first sum on the right hand side of Equation \eqref{Jacobi} is over $j\in \mathbb{N}$ such that $l-n+k+p-j\ge 0$. Thus,
\begin{eqnarray*}
\lefteqn{\vartheta_{\mathrm{Gr}(W)}\left(\sum_{\substack{j\in \mathbb{N}\\n+p-j\ge 0}}(-1)^j\binom{p}{j}[v]_{k, n+p-j}\diamond [u]_{n+p-j, l+p}\right)[w]_{l+p}}\\&-&\sum_{\substack{j\in \mathbb{N}\\l-n+k+p-j\ge 0}} (-1)^{p-j}\binom{p}{j}[u_{\text{wt }u-1+l-n+p-j}v_{\text{wt }v-1+n-k+j}w]_k\\&-&\sum_{j\in \mathbb{N}}\binom{\text{wt }v+n-k-1}{j}[(v_{p+j}u)_{\text{wt }u+\text{wt }v+l-k-2-j}w]_k=0,
\end{eqnarray*}or equivalently,
\begin{eqnarray*}
\lefteqn{\vartheta_{\mathrm{Gr}(W)}\left(\sum_{\substack{j\in \mathbb{N}\\n+p-j\ge 0}}(-1)^j\binom{p}{j}[v]_{k, n+p-j}\diamond [u]_{n+p-j, l+p}\right)[w]_{l+p}}\\&-&\vartheta_{\mathrm{Gr}(W)}\left(\sum_{\substack{j\in \mathbb{N}\\l-n+k+p-j\ge 0}} (-1)^{p-j}\binom{p}{j}[u]_{k,l+p-n+k-j}\diamond[v]_{l+p-n+k-j,l+p}\right)[w]_{l+p}\\&-&\vartheta_{\mathrm{Gr}(W)}\left(\sum_{j\in \mathbb{N}}\binom{\text{wt }v+n-k-1}{j}[v_{p+j}u]_{k,l+p}\right)[w]_{l+p}=0.
\end{eqnarray*}
\end{proof}

\begin{definition}Given a grading restricted vertex algebra, define $J^\infty(V)$ to be the subspace of $U^\infty(V)$ generated by elements of the form \eqref{J_elements}.
\end{definition}

\begin{proposition}\label{gen_Huang_subset}Let $V$ be a grading-restricted vertex algebra and let $k,k',l,l'\in \mathbb{N}$ be such that $l'\ge l$ and $k'\ge k$. If $[v]_{k'l'}\in O^{\infty}_\circ(V)$ then $[v]_{kl}\in O^{\infty}_\circ(V)$.
\end{proposition}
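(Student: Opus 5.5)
The idea is to show that each generator of $O^{\infty}_\circ(V)$ sitting in position $(k',l')$ has its vector part equal to a finite sum of generator vectors for position $(k,l)$. The identity that makes this work is $(1+x)^{l'}=(1+x)^{l}(1+x)^{l'-l}$.

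\emph{Step 1: reduce to a finite sum.} Write $[v]_{k'l'}$ as an element of $O^\infty_\circ(V)$, that is, as an infinite linear combination of elements \eqref{circ_prod} in which each index pair occurs only finitely often. Comparing the $(k',l')$ entries on both sides, only the finitely many terms with index pair $(k',l')$ contribute. So
\[
v=\sum_{s=1}^{N} c_s\,\Res_x x^{-k'-l'-p_s-2}(1+x)^{l'}Y\big((1+x)^{L(0)}u_s,x\big)v_s
\]
for some $c_s\in\C$, $u_s,v_s\in V$ and $p_s\in\N$. The terms at other positions must cancel among themselves, and we discard them. By linearity it then suffices to show that, for fixed $u,w\in V$ and $p\in \N$, the vector
\[
\Res_x x^{-k'-l'-p-2}(1+x)^{l'}Y\big((1+x)^{L(0)}u,x\big)w
\]
placed in position $(k,l)$ lies in $O^\infty_\circ(V)$.

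\emph{Step 2: expand the extra factor.} Since $l'-l\in\N$, the binomial theorem gives the finite sum $(1+x)^{l'}=\sum_{i=0}^{l'-l}\binom{l'-l}{i}x^i(1+x)^l$. Substituting, the vector from Step 1 becomes
\[
\sum_{i=0}^{l'-l}\binom{l'-l}{i}\Res_x x^{-k-l-p_i-2}(1+x)^{l}Y\big((1+x)^{L(0)}u,x\big)w,
\qquad p_i=p+(k'-k)+(l'-l)-i .
\]
Because $i\le l'-l$ and $k'\ge k$, we have $p_i\ge p\ge 0$, so each $p_i\in\N$. Hence each summand, placed in position $(k,l)$, is exactly a generator \eqref{circ_prod} with indices $(k,l,p_i)$. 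Summing the finitely many terms over $s$ and $i$ gives a finite combination, so $[v]_{kl}\in O^\infty_\circ(V)$.

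\emph{Main obstacle.} The calculation itself is routine. The step needing care is Step 1: one has to justify that membership of $[v]_{k'l'}$ in the span of infinite combinations forces $v$ to be a finite combination of generator vectors in that single position. This uses the finiteness condition on each pair $(k,l)$ in the definition of $O^\infty_\circ(V)$, together with the fact that the elementary matrices $E_{kl}$ give a direct-product decomposition of $U^\infty(V)$, so that entries can be compared position by position.
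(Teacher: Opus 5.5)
Your proposal is correct and follows essentially the same route as the paper. The paper also reduces to a single generator vector at position $(k',l')$, writes $(1+x)^{l'}=\sum_{i=0}^{l'-l}\binom{l'-l}{i}x^i(1+x)^l$, and checks that $p+k'-k+l'-l-i\ge 0$; your Step 1 spells out the per-position finiteness argument that the paper only asserts.
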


\begin{proof}If $a\in V$ such that $[a]_{k'l'}\in O^{\infty}_\circ(V)$, then $a$ is a linear combination of elements of the form \[\Res_xx^{-k'-l'-p-2}(1+x)^{l'}Y((1+x)^{L(0)}u, x)v,\] where $u,v\in V$ and $p\in \mathbb{N}$. Therefore, we need only show that given $u,v\in V$ and $p\in \mathbb{N}$, \[\Res_xx^{-k'-l'-p-2}(1+x)^{l'}[Y((1+x)^{L(0)}u, x)v]_{kl}\in O^{\infty}_\circ(V).\]

We have
\begin{eqnarray*}
\lefteqn{\Res_xx^{-k'-l'-p-2}(1+x)^{l'}[Y((1+x)^{L(0)}u, x)v]_{kl}}\\&&=\Res_xx^{-(k'+k-k)-(l'+l-l)-p-2}(1+x)^{l'+l-l}[Y((1+x)^{L(0)}u, x)v]_{kl}\\&&=\Res_xx^{-k-l-(p+k'-k+l'-l)-2}\sum_{i=0}^{l'-l}\binom{l'-l}{i}x^i(1+x)^{l}[Y((1+x)^{L(0)}u, x)v]_{kl}\\&&=\sum_{i=0}^{l'-l}\binom{l'-l}{i}\Res_xx^{-k-l-(p+k'-k+l'-l-i)-2}(1+x)^{l}[Y((1+x)^{L(0)}u, x)v]_{kl}.
\end{eqnarray*}Since $k'\ge k$, $l'\ge l$, and $p\in \mathbb{N}$, we have that $p+k'-k+l'-l-i\ge 0$ for all $i$ such that $0\le i\le l'-l$. The result then follows from the definition of $O^{\infty}_\circ(V)$.
\end{proof} An immediate consequence of Propositions \ref{gen_Huang_subset} is the following corollary: \begin{corollary}\label{kl}
    Let $V$ be a VOA and let $k,k',l,l'\in \mathbb{N}$ such that $k'\ge k$ and $k-l=k'-l'$.  If $[v]_{k'l'}\in O^{\infty}(V)$ then $[v]_{kl}\in O^{\infty}(V)$. 
\end{corollary}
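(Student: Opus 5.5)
The plan is to reduce to Proposition \ref{gen_Huang_subset} by splitting an element of $O^\infty(V)$ concentrated at the single position $(k',l')$ into a part coming from the elements \eqref{circ_prod} and a part coming from the elements \eqref{L}. The point of the hypothesis $k-l=k'-l'$ is that the $L$-type generators only depend on the position through the difference $l-k$, so they transfer unchanged from $(k',l')$ to $(k,l)$. Note also that $k'\ge k$ and $k-l=k'-l'$ force $l'-l=k'-k\ge 0$, so $l'\ge l$ and the hypotheses of Proposition \ref{gen_Huang_subset} are met.

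First I will check that every element of $O^\infty(V)$ whose only nonzero entry is at position $(k',l')$ has that entry equal to a \emph{finite} combination of generators placed at $(k',l')$. Every generator \eqref{circ_prod} or \eqref{L} is of the form $[\,\cdot\,]_{kl}$, so it lives in a single matrix position. The projection $\pi_{k'l'}:U^\infty(V)\to V$, $\sum [a_{kl}]_{kl}\mapsto a_{k'l'}$, is linear. By definition of $O^\infty(V)$, each pair $(k,l)$ occurs only finitely many times in an admissible infinite linear combination. Applying $\pi_{k'l'}$ to such a combination therefore kills all generators at other positions and leaves a finite sum. Concretely, if $[v]_{k'l'}\in O^\infty(V)$, then
\[
v=a+(L(-1)+L(0)+l'-k')w,
\]
for some $w\in V$ (using linearity of $L(-1)+L(0)+l'-k'$ to combine the finitely many $L$-terms). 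Here $a$ is a finite linear combination of elements $\Res_x x^{-k'-l'-p-2}(1+x)^{l'}Y((1+x)^{L(0)}u,x)u'$, so that $[a]_{k'l'}\in O^\infty_\circ(V)$.

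Next, Proposition \ref{gen_Huang_subset} applies to $[a]_{k'l'}$ and gives $[a]_{kl}\in O^\infty_\circ(V)\subset O^\infty(V)$. For the remaining term, $l-k=l'-k'$ gives
\[
[(L(-1)+L(0)+l'-k')w]_{kl}=[(L(-1)+L(0)+l-k)w]_{kl},
\]
which is a generator of type \eqref{L} at position $(k,l)$. Adding the two pieces yields $[v]_{kl}\in O^\infty(V)$.

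The only point needing care is the first step: making rigorous that membership of a single-entry matrix in the ``infinite span'' $O^\infty(V)$ reduces to a finite relation at that entry. I expect the projection argument above to handle it directly. The rest is immediate from Proposition \ref{gen_Huang_subset}.
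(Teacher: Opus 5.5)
Your proof is correct and follows the paper's argument: you split $v$ into an $O^\infty_\circ(V)$ part, which Proposition \ref{gen_Huang_subset} moves from $(k',l')$ to $(k,l)$, and an $L$-type generator, which transfers unchanged because $l-k=l'-k'$. Your projection argument for the decomposition and your check that $l'\ge l$ make explicit two points the paper's short proof leaves implicit.
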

\begin{proof}Proposition \ref{gen_Huang_subset} implies that if $[v]_{k'l'}\in O^\infty_\circ(V)$ then $[v]_{kl}\in O^\infty_\circ(V).$ It is an obvious observation that for $v\in V$, since $k-l=k'-l'$, then $[(L(-1)+L(0)+l'-k')v]_{kl}\in O^\infty(V).$ The result follows.
\end{proof}
\begin{corollary}
If $V$ is a grading-restricted vertex algebra and $[v]_{kl}\in O^\infty(V)$ such that $k,l\in \Z_+$ then $[v]_{k-1,l-1}\in O^\infty(V).$ In particular, elements of $O^\infty(V)$ satisfy the diagonal shift property.
\end{corollary}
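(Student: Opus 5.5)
This is a direct application of Corollary \ref{kl}, taking $k'=k$, $l'=l$ there and replacing the unprimed pair by $(k-1,l-1)$. The plan has three steps.

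\textbf{Step 1: check the hypotheses.} Since $k,l\in\Z_+$, both $k-1$ and $l-1$ lie in $\N$. We also have $k\ge k-1$ and $(k-1)-(l-1)=k-l$. So the index conditions of Corollary \ref{kl} hold for the pair $(k,l)\to(k-1,l-1)$. Corollary \ref{kl} is stated for a VOA, but its proof uses only Proposition \ref{gen_Huang_subset} and the form of the $L(-1)+L(0)$ generators. Both of these make sense for any grading-restricted vertex algebra, so the argument carries over verbatim.

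\textbf{Step 2: decompose $v$.} Suppose $[v]_{kl}\in O^\infty(V)$. A single entry supported at position $(k,l)$ can only come from the generators of $O^\infty(V)$ sitting in position $(k,l)$. This is the one point that deserves a remark: the elementary matrices are linearly independent and each pair $(k,l)$ occurs only finitely often, so projecting onto the $(k,l)$ entry is legitimate. Hence
\[
v=a+(L(-1)+L(0)+l-k)b,
\]
where $[a]_{kl}\in O^\infty_\circ(V)$ and $b\in V$ (a finite sum of such terms, absorbed into $b$ by linearity).

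\textbf{Step 3: shift each piece.} For the first piece, Proposition \ref{gen_Huang_subset} with $k'=k\ge k-1$ and $l'=l\ge l-1$ gives $[a]_{k-1,l-1}\in O^\infty_\circ(V)\subset O^\infty(V)$. For the second piece, $(l-1)-(k-1)=l-k$, so $[(L(-1)+L(0)+l-k)b]_{k-1,l-1}$ is itself one of the generators \eqref{L} of $O^\infty(V)$. Adding the two pieces gives $[v]_{k-1,l-1}\in O^\infty(V)$.

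\textbf{Conclusion.} Since $O^\infty(V)\subset Q^\infty(V)$, proved by Huang in \cite{H2}, we get $[v]_{k-1,l-1}\in Q^\infty(V)$ whenever $[v]_{kl}\in O^\infty(V)$. This is exactly the diagonal shift property. The only potential obstacle is the projection argument in Step 2, and it is routine.
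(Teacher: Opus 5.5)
Your proposal is correct and follows the paper's route. The paper obtains this corollary directly from Corollary \ref{kl} with $(k',l')=(k,l)$ and target pair $(k-1,l-1)$; the proof of that corollary is your Step 3 (Proposition \ref{gen_Huang_subset} for the $O^\infty_\circ(V)$ piece, plus $(l-1)-(k-1)=l-k$ for the $(L(-1)+L(0)+l-k)$ generators), followed by $O^\infty(V)\subset Q^\infty(V)$, and your Step 2 just makes explicit the entrywise decomposition the paper leaves implicit.
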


We next prove that given a grading-restricted vertex algebra $V$, elements of $J^\infty(V)$ satisfy the diagonal shift property. In order to do this, we will need an analogue of Proposition \ref{mult_equiv} for the product $\diamond$ used to define $A^\infty(V):$

\begin{proposition}\label{mult}Let $V$ be a grading-restricted vertex algebra and let $u,v\in V$, $i, k,l\in \Z_+$. Define\begin{equation}\label{diamond}
w:=\sum_{m=0}^i\binom{-k+i-l-1}{m}\Res_xx^{-k+i-l-m-1}(1+x)^lY((1+x)^{L(0)}u, x)v, 
\end{equation}so that by the definition of the product $\diamond$, we have
\begin{equation}[u]_{ki}\diamond [v]_{il}=[w]_{kl}.
\end{equation}Then
$[u]_{k-1,i-1}\diamond [v]_{i-1,l-1}\equiv[w]_{k-1,l-1}$ modulo $O^\infty_\circ(V).$
\end{proposition}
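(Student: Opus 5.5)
We compare the two Laurent polynomials multiplying $Y((1+x)^{L(0)}u,x)v$ under $\Res_x$ and show that their difference is a finite combination of generators of $O^\infty_\circ(V)$ at position $(k-1,l-1)$. Set $N=-k+i-l-1$, so that
\[
w=\Res_x f(x)Y((1+x)^{L(0)}u,x)v,\qquad f(x)=(1+x)^l\sum_{m=0}^{i}\binom{N}{m}x^{N-m}.
\]
By the definition \eqref{diamond} of $\diamond$ applied with indices $(k-1,i-1,l-1)$, we have $[u]_{k-1,i-1}\diamond[v]_{i-1,l-1}=[\Res_x g(x)Y((1+x)^{L(0)}u,x)v]_{k-1,l-1}$. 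Here
\[
g(x)=(1+x)^{l-1}\sum_{m=0}^{i-1}\binom{N+1}{m}x^{N+1-m},
\]
since $-(k-1)+(i-1)-(l-1)-1=N+1$. It therefore suffices to show that $f-g$ lies in the span of $x^{-(k-1)-(l-1)-p-2}(1+x)^{l-1}=x^{-k-l-p}(1+x)^{l-1}$ for $p\in\mathbb{N}$.

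The key step is to apply Pascal's rule $\binom{N+1}{m}=\binom{N}{m}+\binom{N}{m-1}$ inside $g$ and split the sum into two pieces. The first piece is $(1+x)^{l-1}\,x\sum_{m=0}^{i-1}\binom{N}{m}x^{N-m}$. The second piece, after the shift $m\mapsto m+1$, is $(1+x)^{l-1}\sum_{m=0}^{i-2}\binom{N}{m}x^{N-m}$. We then complete the second sum to run up to $m=i-1$ by adding and subtracting $\binom{N}{i-1}x^{N-i+1}(1+x)^{l-1}$. Since $x\,(1+x)^{l-1}+(1+x)^{l-1}=(1+x)^{l}$, the two pieces combine to give
\[
g(x)=(1+x)^{l}\sum_{m=0}^{i-1}\binom{N}{m}x^{N-m}-\binom{N}{i-1}x^{N-i+1}(1+x)^{l-1}.
\]
Consequently
\[
f(x)-g(x)=\binom{N}{i}x^{N-i}(1+x)^{l}+\binom{N}{i-1}x^{N-i+1}(1+x)^{l-1}.
\]
This difference is a finite expression, which avoids any problem with infinite tails: $\Res_x x^{-n}Y(\cdot,x)v$ need not vanish for large $n$.

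It remains to identify the two terms. Since $N-i+1=-k-l$, the second term is exactly a generator of $O^\infty_\circ(V)$ at $(k-1,l-1)$ with $p=0$. For the first term we write $(1+x)^l=(1+x)^{l-1}+x(1+x)^{l-1}$, which yields $x^{-k-l-1}(1+x)^{l-1}+x^{-k-l}(1+x)^{l-1}$. These are generators with $p=1$ and $p=0$. Equivalently, the first term is a $(k,l)$-generator with $p=0$, and one can invoke Proposition \ref{gen_Huang_subset} with $(k',l')=(k,l)$ and target $(k-1,l-1)$.

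Taking $\Res_x$ against $Y((1+x)^{L(0)}u,x)v$ and placing the result at position $(k-1,l-1)$ then gives $[u]_{k-1,i-1}\diamond[v]_{i-1,l-1}-[w]_{k-1,l-1}\in O^\infty_\circ(V)$. The only delicate point is the index bookkeeping in the Pascal-rule manipulation, namely checking the boundary terms at $m=0$ and $m=i-1$. Everything else is routine, and the hypothesis $i,k,l\in\Z_+$ guarantees that all the shifted indices are in $\mathbb{N}$.
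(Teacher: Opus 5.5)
Your proof is correct and is the paper's computation run in the opposite direction. The paper splits $(1+x)^l=(1+x)^{l-1}(1+x)$ inside $w$, discards $\binom{N}{i}x^{-k-l}(1+x)^{l-1}$, $\binom{N}{i-1}x^{-k-l}(1+x)^{l-1}$ and $\binom{N}{i}x^{-k-l-1}(1+x)^{l-1}$ as generators at $(k-1,l-1)$ with $p\in\{0,1\}$, and recombines the rest by Pascal's rule; those discarded terms are exactly your $f-g$.

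There is one slip in your aside: $x^{-k-l-1}(1+x)^{l}$ is a $(k-1,l)$-generator with $p=0$, not a $(k,l)$-generator. So Proposition~\ref{gen_Huang_subset} would have to be invoked with $(k',l')=(k-1,l)$. Your direct splitting into $p=1$ and $p=0$ generators at $(k-1,l-1)$ already settles the point, so the proof itself is unaffected.
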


\begin{proof}The proof is analogous to the proof of Proposition 2.4 in \cite{DLM}. For simplicity, we'll take $u$ to be of homogeneous weight. The general result then follows by linearity. We have
\begin{eqnarray*}
[w]_{k-1,l-1}&=&[\sum_{m=0}^i\binom{-k+i-l-1}{m}\Res_x\frac{(1+x)^{l+\mathrm{wt} u}}{x^{k-i+l+m+1}}Y(u, x)v]_{k-1,l-1}\\&=&[\sum_{m=0}^i\binom{-k+i-l-1}{m}\Res_x\frac{(1+x)^{l+\mathrm{wt} u-1}}{x^{k-i+l+m}}Y(u, x)v]_{k-1,l-1}+\\&&[\sum_{m=0}^i\binom{-k+i-l-1}{m}\Res_x\frac{(1+x)^{l+\mathrm{wt} u-1}}{x^{k-i+l+m+1}}Y(u, x)v]_{k-1,l-1}\\&\equiv&[\sum_{m=0}^{i-1}\binom{-k+i-l-1}{m}\Res_x\frac{(1+x)^{l+\mathrm{wt} u-1}}{x^{k-i+l+m}}Y(u, x)v]_{k-1,l-1}\\&&+[\sum_{m=0}^{i-2}\binom{-k+i-l-1}{m}\Res_x\frac{(1+x)^{l+\mathrm{wt} u-1}}{x^{k-i+l+m+1}}Y(u, x)v]_{k-1,l-1}\mathrm{\, mod\, }O^\infty_\circ(V)\\&=&\Res_x[\frac{Y(u,x)v(1+x)^{\mathrm{wt}u+l-1}}{x^{k+l-i}}]_{k-1,l-1}\\&&+\sum_{m=1}^{i-1}\left(\binom{-k+i-l-1}{m}+\binom{-k+i-l-1}{m-1}\right)[\Res_x\frac{Y(u,x)v(1+x)^{\mathrm{wt}u+l-1}}{x^{k+l-i+m}}]_{k-1,l-1}\\&=&[\Res_x\sum_{m=0}^{i-1}\binom{-k+i-l}{m}\frac{Y(u,x)v(1+x)^{\mathrm{wt}u+l-1}}{x^{k+l-i+m}}]_{k-1,l-1}\\&=&[u]_{k-1,i-1}\diamond[v]_{i-1,l-1}.
\end{eqnarray*}
\end{proof}

\begin{proposition}\label{mult0}Let $k,l\in \Z_+$ and $u,v\in V$. Define 
\begin{equation*}w:=\Res_x\frac{(1+x)^lY((1+x)^{L(0)}u,x)v}{x^{k+l+1}}
\end{equation*}so that
\begin{equation*}
[w]_{kl}=[u]_{k0}\diamond [v]_{0l}.
\end{equation*}Then $[w]_{k-1,l-1}\in O^\infty_\circ(V).$
\end{proposition}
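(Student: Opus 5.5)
Since the statement is about $w$ itself (not about a product $\diamond$ at shifted indices), the plan is to write $w$ explicitly, shift the indices, and recognize the result as an element of $O^\infty_\circ(V)$.

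First compute $w$. In the definition of $\diamond$ with middle index $n=0$, the sum over $m$ has only the term $m=0$, with coefficient $\binom{-k-l-1}{0}=1$. So
\[w=\Res_x x^{-k-l-1}(1+x)^lY((1+x)^{L(0)}u,x)v,\]
as stated. Now view $[w]_{k-1,l-1}$ at indices $(k-1,l-1)$, where the generators of $O^\infty_\circ(V)$ have the form
\[\Res_x x^{-(k-1)-(l-1)-p-2}(1+x)^{l-1}[Y((1+x)^{L(0)}u,x)v]_{k-1,l-1}=\Res_x x^{-k-l-p}(1+x)^{l-1}[\cdots]_{k-1,l-1}\]
with $p\in\N$. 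Next write $(1+x)^l=(1+x)^{l-1}+x(1+x)^{l-1}$, which is legitimate because $l\ge 1$ makes $l-1\in\N$. This gives
\[[w]_{k-1,l-1}=\Res_x x^{-k-l-1}(1+x)^{l-1}[Y((1+x)^{L(0)}u,x)v]_{k-1,l-1}+\Res_x x^{-k-l}(1+x)^{l-1}[Y((1+x)^{L(0)}u,x)v]_{k-1,l-1}.\]
The first term is the generator with $p=1$ and the second is the generator with $p=0$, so both lie in $O^\infty_\circ(V)$.

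The only point needing care is the index bookkeeping: the exponent of $x$ must be $-(k-1)-(l-1)-p-2=-k-l-p$ with $p\ge 0$. The two exponents that occur are $-k-l-1$ and $-k-l$, giving $p=1$ and $p=0$, so there is no real obstacle. This is the $i=0$ analogue of Proposition \ref{mult}. There the shifted product $[u]_{k-1,-1}\diamond[v]_{-1,l-1}$ is meaningless, so the correct statement is that the shifted element is $0$ modulo $O^\infty_\circ(V)$, which is exactly what the computation above shows.
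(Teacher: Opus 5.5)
Your proposal is correct and is essentially the paper's proof. You split off one factor of $(1+x)$ from $(1+x)^l$ and identify the two resulting terms as the $O^\infty_\circ(V)$ generators at indices $(k-1,l-1)$ with $p=1$ and $p=0$. The only cosmetic difference is that the paper first reduces to homogeneous $u$ and works with $(1+x)^{\mathrm{wt}\,u+l}$, whereas you keep $(1+x)^{L(0)}$ intact, so you don't need that reduction.
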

\begin{proof}
Let $u$ be homogeneous. The general result then follows by linearity. We have
\begin{eqnarray*}
[w]_{k-1,l-1}&=&[\Res_x \frac{(1+x)^{\mathrm{wt}u+l}}{x^{k+l+1}}Y(u,x)v]_{k-1,l-1}=[\Res_x \frac{(1+x)^{\mathrm{wt}u+l-1}(1+x)}{x^{(k-1)+(l-1)+3}}Y(u,x)v]_{k-1,l-1}\\&=&[\Res_x \frac{(1+x)^{\mathrm{wt}u+l-1}}{x^{(k-1)+(l-1)+3}}Y(u,x)v]_{k-1,l-1}+[\Res_x \frac{(1+x)^{\mathrm{wt}u+l-1}}{x^{(k-1)+(l-1)+2}}Y(u,x)v]_{k-1,l-1}\in O^\infty_\circ(V).
\end{eqnarray*}
\end{proof}
\begin{proposition}\label{J}Let $V$ be a grading-restricted vertex algebra and let $u,v\in V$ with $v$ homogeneous. Let $k\in \mathbb{N}$ and $l, p, n\in \mathbb{Z}$ such that $k, l+p\in \Z_{+}$. Define 
\begin{eqnarray*}
w&:=&\sum_{\substack{j\in \mathbb{N}\\n+p-j\ge 0}}(-1)^j\binom{p}{j}\sum_{m=0}^{n+p-j}\binom{-k+n-j-l-1}{m}\Res_x\frac{(1+x)^{l+p}}{x^{k-n+j+l+m+1}}Y((1+x)^{L(0)}v,x)u\\&&-\sum_{\substack{j\in \mathbb{N}\\l-n+k+p-j\ge 0}}(-1)^{p-j}\sum_{m=0}^{l-n+k+p-j}\binom{-n-j-1}{m}\Res_x\frac{(1+x)^{l+p}}{x^{n+j+m+1}}Y((1+x)^{L(0)}u,x)v\\ &&-\sum_{j\in \mathbb{N}}\binom{\mathrm{wt}\,v+n-k-1}{j}v_{p+j}u,
\end{eqnarray*} so that in particular, 
\begin{eqnarray}\label{J1}[w]_{k,l+p}&=&
\sum_{\substack{j\in \mathbb{N}\\n+p-j\ge 0}}(-1)^j\binom{p}{j}[v]_{k, n+p-j}\diamond[u]_{n+p-j, l+p}\nonumber\\&&-\sum_{\substack{j\in \mathbb{N}\nonumber\\l-n+k+p-j\ge 0}}(-1)^{p-j}\binom{p}{j}[u]_{k, l-n+k+p-j}\diamond[v]_{l-n+k+p-j, l+p}\nonumber\\ &&-\sum_{j\in \mathbb{N}}\binom{\mathrm{wt}\,v+n-k-1}{j}[v_{p+j}u]_{k, l+p}
\in Q^\infty(V).
\end{eqnarray} Then $[w]_{k-1,l+p-1}\in Q^\infty(V).$ In particular, elements of $J^\infty(V)$ satisfy the diagonal shift property.
\end{proposition}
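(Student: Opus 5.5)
The plan is to write $[w]_{k-1,l+p-1}$ as the sum of three pieces, one for each of the three sums defining $w$. Each $\diamond$-product piece will be compared with the corresponding piece of a Jacobi element at shifted indices, and the third sum will be matched directly. Concretely, I would apply the previous proposition (the one showing \eqref{J_elements} lies in $Q^\infty(V)$) with $k,l,p,n$ replaced by $k-1,\,l-1,\,p,\,n-1$. This is allowed because $k-1\in\mathbb{N}$ and $(l-1)+p=l+p-1\in\mathbb{N}$, since $k,l+p\in\Z_+$. The resulting element lives in the $(k-1,l+p-1)$ slot. Its third sum is $\sum_j\binom{\mathrm{wt}\,v+(n-1)-(k-1)-1}{j}[v_{p+j}u]_{k-1,l+p-1}$. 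The binomial coefficient here equals $\binom{\mathrm{wt}\,v+n-k-1}{j}$, so this sum is exactly the third term of $w$ placed in the shifted slot.

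For the first sum, the shifted Jacobi element contains $[v]_{k-1,n-1+p-j}\diamond[u]_{n-1+p-j,l+p-1}$, ranging over $j$ with $n+p-j-1\ge0$. When $i:=n+p-j\ge1$, Proposition \ref{mult} (applied with $k$, this $i$, and $l+p$ in place of $l$) gives
\[
[v]_{k-1,i-1}\diamond[u]_{i-1,l+p-1}\equiv [w_j]_{k-1,l+p-1}\pmod{O^\infty_\circ(V)},
\]
where $w_j$ is the $j$-th summand of the first sum of $w$. Proposition \ref{mult} requires $i,k,l+p\in\Z_+$, and all three hold. The term with $i=0$, i.e.\ $j=n+p$, appears in $w$ but has no counterpart in the shifted Jacobi element. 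That term is $[v]_{k0}\diamond[u]_{0,l+p}$, and by Proposition \ref{mult0} its shift to $(k-1,l+p-1)$ lies in $O^\infty_\circ(V)$. The second sum is handled the same way: the index $i=l-n+k+p-j$ is unchanged by the substitution, because $(l-1)-(n-1)+(k-1)+p=i-1$. Proposition \ref{mult} then matches the terms with $i\ge1$, and Proposition \ref{mult0} disposes of the term with $i=0$. The signs $(-1)^j\binom{p}{j}$ and $(-1)^{p-j}\binom{p}{j}$ are unchanged because $p$ is not shifted.

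Putting these together, $[w]_{k-1,l+p-1}$ equals the shifted Jacobi element plus an element of $O^\infty_\circ(V)$. Since $O^\infty_\circ(V)\subset O^\infty(V)\subset Q^\infty(V)$ (Huang) and the shifted Jacobi element lies in $Q^\infty(V)$, we get $[w]_{k-1,l+p-1}\in Q^\infty(V)$. The diagonal shift property for elements of $J^\infty(V)$ follows by linearity.

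The main obstacle is the bookkeeping that ensures the explicit residue expressions inside $w$ really are the $w$'s of Proposition \ref{mult} with the correct parameters. In the first sum the exponent is $k-n+j+l+m+1=k-i+(l+p)+m+1$, which fits the form required there. In the second sum the exponent is $n+j+m+1=k-i+(l+p)+m+1$, which also fits. The remaining checks are that the range of the $m$-summation is $0\le m\le i$ and that the binomial coefficients $\binom{-k+i-(l+p)-1}{m}$ match. In addition, Proposition \ref{mult} may need to be extended linearly to nonhomogeneous $u$ or $v$.
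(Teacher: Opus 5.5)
Your proposal is correct and follows the paper's proof essentially step for step. You apply Proposition~\ref{mult} (with $l+p$ in place of $l$) to each $\diamond$-product whose middle index is at least $1$, dispose of the middle-index-zero terms ($j=n+p$ and $j=l-n+k+p$) with Proposition~\ref{mult0}, and recognize the remainder as an element of the form \eqref{J_elements} with $(k,l,n)\mapsto(k-1,l-1,n-1)$ and $p$ fixed; your index bookkeeping is in fact slightly cleaner than the paper's, whose final display keeps the range $n+p-j\ge 0$ where $n+p-j-1\ge 0$ is meant, and whose definition of $w$ omits the factor $\binom{p}{j}$ in the second sum.
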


\begin{proof}Applying Proposition \ref{mult}, we have \begin{eqnarray*}[w]_{k-1,l+p-1}&\equiv& \sum_{\substack{j\in \N\\j=n+p}}(-1)^{n+p}\binom{p}{n+p}[\Res_x\frac{(1+x)^{l+p}Y((1+x)^{L(0)}v,x)u}{x^{k+l+p+1}}]_{k-1,l+p-1}\\&&+\lefteqn{\sum_{\substack{j\in \mathbb{N}\\n+p-j-1\ge 0}}(-1)^j\binom{p}{j}[v]_{k-1, n+p-j-1}\diamond[u]_{n+p-j-1, l+p-1}}\\&&-\sum_{\substack{j\in \N\\j=l-n+k+p}}(-1)^{-l+n-k}\binom{p}{l-n+k+p}[\Res_x\frac{(1+x)^{l+p}Y((1+x)^{L(0)}u,x)v}{x^{k+l+p+1}}]_{k-1,l+p-1}\\&&-\sum_{\substack{j\in \mathbb{N}\\l-n+k+p-j-1\ge 0}}(-1)^{p-j}\binom{p}{j}[u]_{k-1, l-n+k+p-j-1}\diamond[v]_{l-n+k+p-j-1, l+p-1}\\ &&-\sum_{j\in \mathbb{N}}\binom{\text{wt}\,v+n-k-1}{j}[v_{p+j}u]_{k-1, l+p-1} \mathrm{ \,mod \,}O^\infty_\circ(V).
\end{eqnarray*} By Proposition \ref{mult0}, the first and third sums on the right hand side of this expression are equivalent modulo $O^\infty_\circ(V)$ to $0$. Thus the right hand side of the expression is equivalent modulo $Q^\infty(V)$ to 
\begin{eqnarray*}
&&\sum_{\substack{j\in \mathbb{N}\\n+p-j\ge 0}}(-1)^j\binom{p}{j}[v]_{k-1, n+p-j-1}\diamond[u]_{n+p-j-1, l+p-1}\nonumber\\&&-\sum_{\substack{j\in \mathbb{N}\nonumber\\l-n+k+p-j\ge 0}}(-1)^{p-j}\binom{p}{j}[u]_{k-1, l-n+k+p-j-1}\diamond[v]_{l-n+k+p-j-1, l+p-1}\nonumber\\&&-\sum_{j\in \mathbb{N}}\binom{\text{wt}\,v+n-k-1}{j}[v_{p+j}u]_{k-1, l+p-1},
\end{eqnarray*}
which is an expression of the form \eqref{J_elements}, obtained by sending $k\mapsto k-1,$ $l\mapsto l-1,$ and $n\mapsto n-1.$  Thus $[w]_{k-1,l+p-1}\in Q^\infty(V)$, completing the proof.
\end{proof}

We summarize the main results of this paper in the following theorem.

\begin{theorem}\label{main}Let $V$ be a grading restricted vertex algebra. Then elements generated by $O^\infty(V)$ and $J^\infty(V)$ satisfy the diagonal shift property.
\end{theorem}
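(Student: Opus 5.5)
Theorem \ref{main} is essentially a summary of the two corollary-type statements already proved, so I will assemble it from them. The diagonal shift property is phrased for elements of the form $[v]_{kl}$ with $k,l\in\Z_+$. I therefore interpret "elements generated by $O^\infty(V)$ and $J^\infty(V)$" as elements $[v]_{kl}$, with $k,l \in \Z_+$, lying in the subspace $O^\infty(V)+J^\infty(V)$.

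First I reduce to a single matrix position. The projection $U^\infty(V)\to V$, $\sum [a_{ij}]_{ij}\mapsto a_{kl}$, extracts the $(k,l)$ entry. Both $O^\infty(V)$ and $J^\infty(V)$ are spanned, up to the allowed infinite sums with finitely many terms per position, by elements concentrated at single positions. Indeed, \eqref{circ_prod} and \eqref{L} each live at position $(k,l)$, and each element \eqref{J_elements} equals $[w]_{k,l+p}$ for the $w$ written in Proposition \ref{J}, by the definition of $\diamond$. So if $[v]_{kl}=\mathfrak{o}+\mathfrak{j}$ with $\mathfrak{o}\in O^\infty(V)$ and $\mathfrak{j}\in J^\infty(V)$, then comparing $(k,l)$ entries gives $v=a+b$. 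Here $a$ is the $(k,l)$ entry of $\mathfrak{o}$, and $[a]_{kl}\in O^\infty(V)$, since it is the finite sub-combination of generators sitting at $(k,l)$. Likewise $b$ is a finite linear combination of vectors $w$ with $[w]_{kl}$ a generator of the form \eqref{J_elements}. The off-position parts of $\mathfrak{o}$ and $\mathfrak{j}$ cancel and can be ignored, because the property only concerns the single entry $v$.

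Next I apply the earlier results to each piece. For $a$, the corollary following Corollary \ref{kl} gives $[a]_{k-1,l-1}\in O^\infty(V)\subset Q^\infty(V)$. For each summand $w$ of $b$, Proposition \ref{J} applies with $k\in\Z_+$ and $l+p\in\Z_+$, where $l+p$ is the column index. It gives $[w]_{k-1,l+p-1}\in Q^\infty(V)$; in fact the image is again of the form \eqref{J_elements}, up to $O^\infty_\circ(V)$. By linearity, $[v]_{k-1,l-1}=[a]_{k-1,l-1}+[b]_{k-1,l-1}\in Q^\infty(V)$, which is the diagonal shift property.

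The main obstacle is the reduction step. One has to check that an element of the span at position $(k,l)$ really decomposes into generators at that same position, so that Proposition \ref{J} can be applied termwise. Two things need care here. First, the $J$-type generators must be correctly identified as single-position elements $[w]_{k,l+p}$ with the parameter constraints $k, l+p\in\Z_+$ of Proposition \ref{J}. Second, "generated" must be read as linear span rather than ideal under $\diamond$. Under the ideal reading, products with arbitrary elements would require an additional compatibility argument, namely the $\diamond$-analogue of Proposition \ref{mult} for general factors. I would handle that case, if needed, by invoking Proposition \ref{mult} together with Proposition \ref{mult0} on each product.
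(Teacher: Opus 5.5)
Your proposal is correct and follows the paper. There, Theorem \ref{main} is given no separate proof; it is simply a summary of the corollary following Corollary \ref{kl} (for $O^\infty(V)$) and Proposition \ref{J} (for $J^\infty(V)$). Your position-by-position reduction, comparing $(k,l)$ entries so that each result applies termwise, is a detail the paper leaves implicit, and it is valid under the linear-span reading that matches the paper's definition of $J^\infty(V)$. Your optional ideal-reading sketch would also go through, provided you add that $Q^\infty(V)$ is a two-sided $\diamond$-ideal, which holds because each $\vartheta_{\mathrm{Gr}(W)}$ is an algebra homomorphism.
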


\section{The Rank One Heisenberg VOA}\label{Heis}

In this section we show that in the case that $V$ is the rank one Heisenberg VOA, there exist elements in $Q^\infty(V)$ that do not satisfy the diagonal shift property.

Let $\mathfrak{h}$ be a one-dimensional Lie algebra with a bilinear form $(\cdot, \cdot)$. Let $\alpha\in \mathfrak{h}$ such that $(\alpha, \alpha)=1$. Define $\widehat{\mathfrak{h}}$ to be the affinization of $\mathfrak{h}$,
\[\widehat{\mathfrak{h}}=\alpha\otimes \mathbb{C}[t, t^{-1}]\oplus \mathbb{C}c\] with Lie bracket given by\begin{align*}[\alpha(m), \alpha(n)]&=m\delta_{m+n,0}c\\ [\alpha(m), c]&=0,\end{align*}where $\alpha(m)=\alpha\otimes t^m$.

Define $\widehat{\mathfrak{h}}^-=\mathfrak{h}\otimes t^{-1}\mathbb{C}[t^{-1}]$ and $\widehat{\mathfrak{h}}^+=\mathfrak{h}\otimes t\mathbb{C}[t]$ and let $M(1)$ be the induced $\widehat{\mathfrak{h}}$-module,
\[
M(1) = U(\widehat{\mathfrak{h}})\otimes_{U(\C[t]\otimes \mathfrak{h} \oplus \C c)} \C{\bf 1} \simeq S(\widehat{\mathfrak{h}}^{-}) \qquad \mbox{(linearly)},
\]
where $U(\cdot)$ and $S(\cdot)$ denote the universal enveloping algebra and symmetric algebra, respectively, $\mathfrak{h} \otimes \C[t]$ acts trivially on $\mathbb{C}\mathbf{1},$ and $c$ acts as multiplication by $1$. Then $M(1)$ is a VOA called the {\emph{rank one Heisenberg VOA}}.

The following proposition characterizes lower-bounded generalized $M(1)$-modules.
\begin{proposition}\cite{FLM, LL, M}\label{hmod} Let $W$ be a lower-bounded generalized $M(1)$-module. Then as an $\widehat{\mathfrak{h}}$-module, 
\[W\cong M(1)\otimes \Omega(W),\] where $\Omega(W)=\{w\in W|\alpha(n)w=0, n>0\}.$
\end{proposition}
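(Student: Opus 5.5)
The plan is to treat $W$ purely as a module for the Heisenberg Lie algebra $\widehat{\mathfrak{h}}$ and to reproduce the classical complete-reducibility argument (cf.\ \cite{FLM, LL}). The map to construct is
\[
\Phi: M(1)\otimes \Omega(W)\to W,\qquad \alpha(-n_1)\cdots\alpha(-n_r)\mathbf{1}\otimes w\mapsto \alpha(-n_1)\cdots\alpha(-n_r)w ,
\]
and the goal is to show that it is an isomorphism of $\widehat{\mathfrak{h}}$-modules.

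\textbf{Preliminaries.} First I will record the facts that make $W$ a Heisenberg module of the right kind.
\begin{enumerate}[label=(\alph*)]
\item Writing $Y_W(\alpha(-1)\mathbf{1},x)=\sum_n \alpha(n)x^{-n-1}$, the Jacobi identity for $a=b=\alpha(-1)\mathbf{1}$ gives $[\alpha(m),\alpha(n)]=m\delta_{m+n,0}$. Hence $c$ acts as $1$ and $\alpha(0)$ is central.
\item The $L_W(0)$-bracket formula gives $[L_W(0),\alpha(m)]=-m\alpha(m)$. Combined with the lower bound on the real parts of weights, this shows that each positive mode lowers generalized weight and that products of positive modes act locally nilpotently on $W$.
\item Using $Y_W(\omega,x)$ with $\omega=\tfrac12\alpha(-1)^2\mathbf{1}$, we have $L_W(0)=\tfrac12\alpha(0)^2+D$, where $D=\sum_{n>0}\alpha(-n)\alpha(n)$. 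The sum defining $D$ is finite on each vector. One checks $[D,\alpha(m)]=-m\alpha(m)$.
\end{enumerate}
The map $\Phi$ is well defined because $M(1)\otimes\Omega(W)$ is the induced module $U(\widehat{\mathfrak{h}})\otimes_{U(\mathfrak{h}\otimes\C[t]\oplus\C c)}\Omega(W)$. On this induced module, $\alpha(n)$ for $n>0$ acts on $S(\widehat{\mathfrak{h}}^-)\otimes\Omega(W)$ as $n\,\partial/\partial\alpha(-n)$, and $\alpha(0)$ acts on the second factor. Since $\Phi$ is the map induced by the inclusion $\Omega(W)\subset W$, it is an $\widehat{\mathfrak{h}}$-homomorphism.

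\textbf{Injectivity.} Let $K=\ker\Phi$, which is a submodule. If $K\neq 0$, take a nonzero element of $K$ and apply positive modes as long as the result stays nonzero. The degree in $S(\widehat{\mathfrak{h}}^-)$ strictly drops each time, so we reach a nonzero element of $K$ killed by every $\alpha(n)$ with $n>0$. In the differential-operator model, the only such vectors are those in $\mathbf{1}\otimes\Omega(W)$. But $\Phi(\mathbf{1}\otimes w)=w\neq 0$ for $w\ne 0$, a contradiction. Hence $\Phi$ is injective.

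\textbf{Surjectivity (main obstacle).} Let $W'=\operatorname{im}\Phi$. I will use the operator $D$ from (c).
\begin{enumerate}[label=(\roman*)]
\item Show that $D$ is locally finite on $W$ with generalized eigenvalues in $\N$, so that $W=\bigoplus_{d\in\N}W^{d}$ by generalized $D$-eigenvalue. Local finiteness holds because $D=L_W(0)-\tfrac12\alpha(0)^2$ is a difference of commuting locally finite operators, using that $\alpha(0)$ is central and preserves generalized $L_W(0)$-eigenspaces. For the eigenvalues: the commutation relation $[D,\alpha(n)]=-n\alpha(n)$ means positive modes lower the $D$-eigenvalue by $n$. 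A generalized eigenvector $w$ is killed by all positive modes once its eigenvalue minus $n$ is not an eigenvalue for every $n>0$. By lower-boundedness this happens after finitely many steps, and at that point $Dw=0$. Tracing back, every eigenvalue is a nonnegative integer.
\item Show that $W^{0}\subset\Omega(W)$. This requires some care with generalized rather than genuine eigenvectors, and I will argue it by induction on nilpotency length.
\item Prove by induction on $d$ that $W^{d}\subset W'$. For $d=0$ this is (ii). For $d>0$, take $w\in W^{d}$. Each $\alpha(n)w$ with $n>0$ lies in $W^{d-n}\subset W'$ by induction, so $Dw\in W'$ because $W'$ is stable under the $\alpha(-n)$. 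Since $D$ is invertible on $W^{d}$ for $d\neq0$, and $W'$ is $D$-stable, we get $w\in W'$.
\end{enumerate}
Steps (i) and (ii) are where I expect the real work to lie. If they become delicate, I would fall back on the standard complete-reducibility theorem for Heisenberg modules of level $1$ on which positive modes act locally nilpotently, as proved in \cite{LL}.
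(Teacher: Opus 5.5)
Your architecture is sound and more self-contained than the paper's proof. The paper only observes that the canonical $\N$-grading makes $W$ a restricted $\Z$-graded $\widehat{\mathfrak{h}}$-module and then defers to the argument of \cite{M}, which is essentially your fallback. The map $\Phi$, its equivariance, the injectivity argument and step (iii) are all fine.

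The genuine gap is step (i), the local finiteness of $D=\sum_{n>0}\alpha(-n)\alpha(n)$ on $W$, on which (ii) and (iii) rest. Your justification fails on two counts.
\begin{enumerate}
\item In the paper's definition, $L_W(0)$ is only \emph{some} operator satisfying the $L_W(0)$-bracket formula. The identity in (c) holds for the coefficient of $x^{-2}$ in $Y_W(\tfrac12\alpha(-1)^2\mathbf{1},x)$, and $L_W(0)$ may differ from that operator by an operator commuting with all modes.
\item Being central and preserving generalized $L_W(0)$-eigenspaces does not make $\alpha(0)$ locally finite, since those eigenspaces may be infinite-dimensional.
\end{enumerate}
Both failures occur in $W=M(1)\otimes\C[t]$ with $\alpha(0)$ acting by multiplication by $t$ and $L_W(0):=D$. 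This is a lower-bounded generalized $M(1)$-module in the paper's sense, but $\alpha(0)$ is not locally finite on it and $L_W(0)\neq\tfrac12\alpha(0)^2+D$.

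The repair is to derive local finiteness of $D$ directly from (b). Given $w$, choose $N$ such that every product $\alpha(m_1)\cdots\alpha(m_r)$ with all $m_j>0$ and $\sum m_j>N$ kills $w$. Consider the span of the ``balanced'' vectors $\alpha(-n_1)\cdots\alpha(-n_s)\alpha(m_1)\cdots\alpha(m_r)w$ with $n_i,m_j>0$ and $\sum n_i=\sum m_j\le N$. This span is finite-dimensional and contains $w$. It is also $D$-stable: a balanced monomial commutes with $D$, and normal-ordering $\alpha(m_1)\cdots\alpha(m_r)\alpha(-k)\alpha(k)w$ produces only balanced vectors of the same kind, or zero.

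Your vacuum argument then puts the spectrum in $\N$, provided you run it on a genuine eigenvector and drive it by the local nilpotency from (b), not by an a priori bound on $D$-eigenvalues. Step (ii) then needs no induction, since $\alpha(n)W^0\subset W^{-n}=0$. With this fix your proof is complete.

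Working with the intrinsic operator $D$ is a sensible choice. The canonical $\N$-degree used in the paper need not agree with $D$: take $W=M(1)\oplus M(1)$ with $L_W(0)$ acting as $D$ on the first summand and as $D+1$ on the second. The vacuum of the second summand then has canonical degree $1$ but $D$-eigenvalue $0$. So an argument by induction on the canonical grading must additionally lift vectors whose positive-mode images lie in $W'$. That lifting is done using $D$ only on $W'\cong M(1)\otimes\Omega(W)$, where $D$ is diagonalizable by construction.
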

\begin{proof}Use the canonical $\mathbb{N}$-grading of $W$ to prove that $W$ is a restricted $\mathbb{Z}$-graded $\widehat{h}$-module. The result then follows using the same argument as in the proof of Proposition 3.1 in \cite{M}.
\end{proof}

The next proposition provides an infinite set of linearly independent vectors in $Q^\infty(M(1))$ that do not satisfy the diagonal shift property.

\begin{proposition}\label{ker1}Fix $n\in \mathbb{\Z}_+$. Then $[\alpha(-1){\bf 1}]_{nn}\diamond[\alpha(-1){\bf 1}]_{nn}-[\alpha(-1)^2{\bf 1}]_{nn}+2n[{\bf 1}]_{nn}$ is an element of  $Q^\infty(M(1))$ that does not satisfy the diagonal shift property. 
\end{proposition}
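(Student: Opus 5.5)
The plan is to turn both assertions into statements about zero modes acting on the graded pieces $\mathrm{Gr}_n(W)$ of lower-bounded generalized $M(1)$-modules.

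\textbf{Step 1: reduce to an element of the form $[X]_{nn}$.} By the Remark after the definition of $\diamond$, $[\alpha(-1){\bf 1}]_{nn}\diamond[\alpha(-1){\bf 1}]_{nn}=[\alpha(-1){\bf 1}*_n\alpha(-1){\bf 1}]_{nn}$. So the element in question is $[X]_{nn}$ with
\[
X=\alpha(-1){\bf 1}*_n\alpha(-1){\bf 1}-\alpha(-1)^2{\bf 1}+2n{\bf 1}.
\]
For homogeneous $v$, the definition of $\vartheta_{\mathrm{Gr}(W)}$ gives $\vartheta([v]_{mm})[w]_m=[v_{\mathrm{wt}\,v-1}w]_m=[o(v)w]_m$. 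Thus $[v]_{mm}$ acts on $\mathrm{Gr}_m(W)$ by the zero mode $o(v)$, and by zero on every other $\mathrm{Gr}_{m'}(W)$.

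\textbf{Step 2: describe $\Omega_m(W)$ and the relevant zero modes.} Let $W$ be a lower-bounded generalized $M(1)$-module. By Proposition \ref{hmod}, $W\cong M(1)\otimes\Omega(W)$. Grade $W$ by the $L(0)$-degree $d$ of the $M(1)$ factor, so that $W=\bigoplus_d M(1)_{(d)}\otimes\Omega(W)$. I claim
\[
\Omega_m(W)=\bigoplus_{d\le m}M(1)_{(d)}\otimes\Omega(W).
\]
\begin{itemize}
\item[($\supseteq$)] Every mode of $M(1)$ is a (normally ordered, infinite) expression in the $\alpha(j)$. A mode of operator weight $<-m$ lowers $d$ by more than $m$, so it kills vectors with $d\le m$.
\item[($\subseteq$)] If $w$ has a nonzero component of degree $d>m$, choose a creation monomial occurring in that top component. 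The corresponding product of annihilators $\alpha(j_1)\cdots\alpha(j_r)$, with $\sum j_i=d$, is a mode of the vector $\alpha(-j_1)\cdots\alpha(-j_r){\bf 1}$ of operator weight $-d<-m$, and it does not kill $w$.
\end{itemize}
Consequently $\mathrm{Gr}_m(W)\cong M(1)_{(m)}\otimes\Omega(W)$.

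Next the zero modes. We have $o(\alpha(-1){\bf 1})=\alpha(0)$. From the normal ordering of $Y(\alpha(-1)^2{\bf 1},x)={:}\alpha(x)^2{:}$,
\[
o(\alpha(-1)^2{\bf 1})=\alpha(0)^2+2\sum_{j>0}\alpha(-j)\alpha(j).
\]
The operator $\sum_{j>0}\alpha(-j)\alpha(j)$ acts on $M(1)_{(d)}\otimes\Omega(W)$ by the scalar $d$.

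\textbf{Step 3: $[X]_{nn}\in Q^\infty(M(1))$.} Since $\vartheta_{\mathrm{Gr}(W)}$ is an algebra homomorphism, on $\mathrm{Gr}_n(W)$ the term $[\alpha(-1){\bf 1}]_{nn}\diamond[\alpha(-1){\bf 1}]_{nn}$ acts by $\alpha(0)^2$. The term $[\alpha(-1)^2{\bf 1}]_{nn}$ acts by $\alpha(0)^2+2n$, and $[{\bf 1}]_{nn}$ acts by $1$. The total action is
\[
\alpha(0)^2-\alpha(0)^2-2n+2n=0
\]
on $\mathrm{Gr}_n(W)$, and $[X]_{nn}$ acts by zero on all other graded pieces. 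Hence $[X]_{nn}\in Q^\infty(M(1))$.

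\textbf{Step 4: failure of the diagonal shift.} We must show $[X]_{n-1,n-1}\notin Q^\infty(M(1))$. This is the delicate point: $X$ was built with $*_n$, not $*_{n-1}$. Proposition \ref{mult} applies with $i=k=l=n\in\mathbb{Z}_+$ and gives
\[
[\alpha(-1){\bf 1}*_n\alpha(-1){\bf 1}]_{n-1,n-1}\equiv[\alpha(-1){\bf 1}]_{n-1,n-1}\diamond[\alpha(-1){\bf 1}]_{n-1,n-1}\mod O^\infty_\circ(M(1)).
\]
Since $O^\infty_\circ(M(1))\subset O^\infty(M(1))\subset Q^\infty(M(1))$, the left-hand side acts on $\mathrm{Gr}_{n-1}(W)$ by $\alpha(0)^2$. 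Using Step 2 with degree $n-1$, $[X]_{n-1,n-1}$ therefore acts on $\mathrm{Gr}_{n-1}(W)$ by
\[
\alpha(0)^2-\alpha(0)^2-2(n-1)+2n=2.
\]
Take $W=M(1)$. Then $\mathrm{Gr}_{n-1}(M(1))\cong M(1)_{(n-1)}\ne0$, since $n-1\ge0$. So $[X]_{n-1,n-1}$ acts by a nonzero scalar and is not in $Q^\infty(M(1))$.

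The main obstacle I expect is the rigorous identification of $\Omega_m(W)$ in Step 2, in particular the inclusion ($\subseteq$), which needs the annihilator argument above. The bookkeeping of which product $*_n$ versus $*_{n-1}$ is in play in Step 4 is handled entirely by Proposition \ref{mult}.
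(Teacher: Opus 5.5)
Given the description of $\Omega_m(W)$, your Steps 1, 3 and 4 are correct and follow the paper's own argument, including the use of Proposition~\ref{mult} to pass from $*_n$ to the level-$(n-1)$ $\diamond$-product. The gap is the inclusion $\Omega_m(W)\subseteq\bigoplus_{d\le m}M(1)_{(d)}\otimes\Omega(W)$, and you need it for both halves. Without it you cannot conclude that $\sum_{j>0}\alpha(-j)\alpha(j)$ acts by $n$ on $\mathrm{Gr}_n(W)$. Nor can you conclude that $\mathrm{Gr}_{n-1}(M(1))$ is nonzero with $[X]_{n-1,n-1}$ acting by $2$.

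Your justification rests on the claim that $\alpha(j_1)\cdots\alpha(j_r)$ is a mode of $\alpha(-j_1)\cdots\alpha(-j_r)\mathbf{1}$. This is false for $r\ge 2$. The weight $-d$ mode of that vector is
\[
\sum_{n_1+\cdots+n_r=d}\ \prod_{i=1}^r\binom{-n_i-1}{j_i-1}\,{:}\alpha(n_1)\cdots\alpha(n_r){:}.
\]
On vectors of degree $d$, every term with all $n_i\ge 0$ survives, including the $\alpha(0)$-terms when $\alpha(0)\neq 0$ on $\Omega(W)$. So the mode mixes monomials. For example, the weight $-4$ mode of $\alpha(-1)\alpha(-3)\mathbf{1}$ acts on $M(1)_{(4)}$ as $13\,\alpha(1)\alpha(3)+6\,\alpha(2)^2$. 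This operator annihilates $16\,\alpha(-1)\alpha(-3)\mathbf{1}-13\,\alpha(-2)^2\mathbf{1}$, even though $\alpha(-1)\alpha(-3)\mathbf{1}$ occurs in that vector. So ``it does not kill $w$'' does not follow.

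The inclusion is nevertheless true; the paper simply asserts this description of $\Omega_n(W)$ after Proposition~\ref{hmod}. Your argument can be repaired. What you actually need is that every product $\alpha(j_1)\cdots\alpha(j_r)$ with all $j_i>0$ and $s=\sum_i j_i>m$ annihilates $\Omega_m(W)$. Once you have that, applying this product to $w$ gives a nonzero multiple of $\mathbf{1}\otimes c$, exactly as you intended.

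Prove the claim by induction on $r$. The case $r=1$ holds because $\alpha(s)$ is the mode of $\alpha(-1)\mathbf{1}$ of weight $-s<-m$. For $r\ge 2$, apply the weight $-s$ mode of an arbitrary length-$r$ vector $\alpha(-k_1)\cdots\alpha(-k_r)\mathbf{1}$ to $w\in\Omega_m(W)$; this mode kills $w$ by the definition of $\Omega_m(W)$.
\begin{itemize}
\item Consider any normally ordered term with fewer than $r$ positive indices. Its annihilating part has total $s-\sum_{n_i<0}n_i\ge s>m$, so it kills $w$ by induction.
\item What remains is
\[
\sum_{n\in\mathbb{Z}_+^r,\ \sum_i n_i=s}\ \prod_{i=1}^r\binom{-n_i-1}{k_i-1}\,\alpha(n_1)\cdots\alpha(n_r)w=0 .
\]
\item The function $n\mapsto\binom{-n-1}{k-1}$ is a polynomial of degree $k-1$. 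As $(k_1,\dots,k_r)$ varies, these coefficient functions therefore span $\mathbb{C}[n_1,\dots,n_r]$.
\item Any function on the finite set $\{n\in\mathbb{Z}_+^r:\sum_i n_i=s\}$ is the restriction of a polynomial. Taking the indicator function of $(j_1,\dots,j_r)$ isolates the single product and gives $\alpha(j_1)\cdots\alpha(j_r)w=0$.
\end{itemize}
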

\begin{proof}
Let $W$ be a lower-bounded generalized $M(1)$-module. Then by Proposition \ref{hmod}, \begin{equation*}W\cong M(1)\otimes \Omega(W).\end{equation*} Thus for $n\in \N$ with $n>0,$ \begin{equation*}\Omega_{n}(W)=\text{span }\{\alpha(-i_1)\cdots\alpha(-i_j){\bf 1}\otimes w|w\in \Omega(W),\, j\in \N,\, i_1+\cdots+i_j\le n\}\end{equation*} and \begin{equation*}\Omega_{n-1}(W)=\text{span }\{\alpha(-i_1)\cdots\alpha(-i_j){\bf 1}\otimes w|w\in \Omega(W),\, j\in \N,\, i_1+\cdots+i_j\le n-1\}\end{equation*} so that
\begin{eqnarray*}\mathrm{Gr}_n(W)&=&\Omega_n(W)/\Omega_{n-1}(W)\\&=&\text{span }\{\alpha(-i_1)\cdots\alpha(-i_j){\bf 1}\otimes w+\Omega_{n-1}(W)|w\in \Omega(W),\,j\in \N,\,i_1+\cdots+i_j=n\}.\end{eqnarray*} 

Let $x=\alpha(-i_1)\cdots\alpha(i_j){\bf 1}\otimes w$ where $w\in \Omega(W)$ and $i_1+\cdots+i_j=n.$ 
Then
\begin{eqnarray*}\lefteqn{\vartheta_{\mathrm{Gr}(W)}([\alpha(-1){\bf 1}]_{nn}\diamond[\alpha(-1){\bf 1}]_{nn}-[\alpha(-1)^2{\bf 1}]_{nn}+2n[{\bf 1}]_{nn})[x]_n}\\&=&[(\alpha(0)^2-(\alpha(0)^2+2\sum_{k=1}^\infty\alpha(-k)\alpha(k))+2n)\alpha(-i_1)\cdots\alpha(i_j)\otimes w]_{n}\\&=&[(2n-\sum_{k=1}^\infty\alpha(-k)\alpha(k))\alpha(-i_1)\cdots\alpha(i_j)\otimes w]_{n}=[0]_{n},
\end{eqnarray*} so $[\alpha(-1){\bf 1}]_{nn}\diamond[\alpha(-1){\bf 1}]_{nn}-[\alpha(-1)^2{\bf 1}]_{nn}+2n[{\bf 1}]_{nn}\in Q^\infty(M(1))$. 

Next, recall that by Proposition \ref{mult}, if \begin{equation}\label{diamond}
w:=\sum_{m=0}^n\binom{-n-1}{m}\Res_xx^{-n-m-1}(1+x)^nY((1+x)^{L(0)}\alpha(-1){\bf 1}, x)\alpha(-1){\bf 1}, 
\end{equation}so that by the definition of the product $\diamond$, we have
\begin{equation*}[\alpha(-1){\bf 1}]_{nn}\diamond [\alpha(-1){\bf 1}]_{nn}=[w]_{kl},\end{equation*} then $[\alpha(-1){\bf 1}]_{n-1,n-1}\diamond [\alpha(-1){\bf 1}]_{n-1,n-1}\equiv[w]_{n-1,n-1}$ modulo $Q^\infty(M(1))$. Thus, in order to show that $[\alpha(-1){\bf 1}]_{nn}\diamond[\alpha(-1){\bf 1}]_{nn}-[\alpha(-1)^2{\bf 1}]_{nn}+2n[{\bf 1}]_{nn}$ does not satisfy the diagonal shift property, we need only show that $[\alpha(-1){\bf 1}]_{n-1,n-1}\diamond[\alpha(-1){\bf 1}]_{n-1,n-1}-[\alpha(-1)^2{\bf 1}]_{n-1,n-1}+2n[{\bf 1}]_{n-1,n-1}\notin Q^\infty(M(1))$.

Let $x={\alpha(-1)^{n-1}\bf 1}\otimes w$ where $w\in \Omega(W)$, then
\begin{eqnarray*}
&&\vartheta_{Gr(W)}([\alpha(-1){\bf 1}]_{n-1,n-1}\diamond[\alpha(-1){\bf 1}]_{n-1,n-1}-[\alpha(-1)^2{\bf 1}]_{n-1,n-1}+2n[{\bf 1}]_{n-1,n-1})[x]_{n-1}\\&=&[(\alpha(0)^2-(\alpha(0)^2+2\sum_{k=1}^\infty\alpha(-k)\alpha(k))+2n)\alpha(-1)^{n-1}{\bf 1}\otimes w]_{n-1}=2[x]_{n-1}\neq[0]_{n-1}.
\end{eqnarray*}Thus $[\alpha(-1){\bf 1}]_{n-1,n-1}\diamond[\alpha(-1){\bf 1}]_{n-1,n-1}-[\alpha(-1)^2{\bf 1}]_{n-1,n-1}+2n[{\bf 1}]_{n-1,n-1}\notin Q^\infty(M(1))$ and therefore $[\alpha(-1){\bf 1}]_{nn}\diamond[\alpha(-1){\bf 1}]_{nn}-[\alpha(-1)^2{\bf 1}]_{nn}+2n[{\bf 1}]_{nn}$ does not satisfy the diagonal shift property.
\end{proof}

\begin{remark} From Theorem 6 of \cite{BVY_Heis}, one has

\begin{theorem}\label{BVY_Heis}
Let $I$ be the ideal generated by $(x^2-y)(x^2-y+2)$ in $\mathbb{C}[x, y]$. Then
\begin{eqnarray*}A_1(M(1))\cong \mathbb{C}[x, y]/I
\end{eqnarray*}under the identifications 
\[\alpha(-1){\bf 1}+O_1(M(1))=x+I\quad \quad \alpha(-1)^2{\bf 1}+O_1(M(1))=y+I.\]
\end{theorem}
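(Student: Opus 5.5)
This theorem is the identification $A_1(M(1))\cong \C[x,y]/I$ with $I=\big((x^2-y)(x^2-y+2)\big)$, and I will prove it by bounding $A_1(M(1))$ from above and below.

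\emph{Step 1 (surjectivity).} Show that the classes of $\alpha(-1){\bf 1}$ and $\alpha(-1)^2{\bf 1}$ generate $A_1(M(1))$ and commute under $*_1$. This gives a surjection $\C[x,y]\to A_1(M(1))$.

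\emph{Step 2 (reduction of generators).} Use the $O_1$ relations to reduce any PBW monomial to these two. The relation $(L(-1)+L(0))v\in O_1(M(1))$ and the elements $u\circ_1 v$ from \eqref{define-circ}, together with Proposition \ref{prop1}, give recursive formulas. These express $\alpha(-m-3)v$ modulo $O_1(M(1))$ through $\alpha(-m-2)v,\dots,\alpha(-1)v$ with $\alpha(-1)$ acting by $*_1$-type products. Concretely, I would write $\alpha(-1){\bf 1}*_1 v$ from \eqref{*_n-definition} and solve inductively. This reduces every element to a polynomial in $x$, $y$ and the class of $\alpha(-2)\alpha(-1){\bf 1}$ (or $\alpha(-2){\bf 1}$). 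Those last classes are in turn expressed as polynomials in $x$ and $y$, again by the same relations.

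\emph{Step 3 (the relation).} Check that $(x^2-y)(x^2-y+2)$ maps to $O_1(M(1))$. Evaluate on the lowest pieces of modules: $A_1(M(1))$ acts faithfully on $\bigoplus_M\Omega_1(M)$ for admissible $M$, by the characterization of $O_n(V)$ recalled from \cite{DLM}. By Proposition \ref{hmod} we may take $M=M(1)\otimes\Omega$, where $\alpha(0)$ acts on $\Omega$ (possibly non-semisimply). Then $\Omega_1(M)=\Omega\oplus\alpha(-1)\Omega$.

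Zero modes act as follows: $o(\alpha(-1){\bf 1})=\alpha(0)$, and
\[
o(\alpha(-1)^2{\bf 1})=\alpha(0)^2+2\sum_{k\ge1}\alpha(-k)\alpha(k).
\]
Hence $x^2-y$ acts as $-2N$, where $N=\sum_{k\ge 1}\alpha(-k)\alpha(k)$ is the degree operator. $N$ equals $0$ on $\Omega$ and $1$ on $\alpha(-1)\Omega$. Therefore $x^2-y$ acts as $0$ on $\Omega$ and as $-2$ on $\alpha(-1)\Omega$, so $(x^2-y)(x^2-y+2)$ acts as zero and lies in $O_1(M(1))$.

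\emph{Step 4 (injectivity), the main obstacle.} Show that nothing smaller than $I$ is killed. Given $f\notin I$, I would exhibit modules on which $f$ acts nonzero. Take $\Omega=\C w$ with $\alpha(0)=\lambda$; then $(x,y)$ acts on $\Omega$ as $(\lambda,\lambda^2)$ and on $\alpha(-1)\Omega$ as $(\lambda,\lambda^2+2)$. The points $(\lambda,\lambda^2)$ and $(\lambda,\lambda^2+2)$ are Zariski dense in the two components of $V(I)$, which settles the reduced part. Because $I$ is radical here (the two factors are distinct irreducibles), this suffices.

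The genuinely delicate point is Step 2: verifying that the reductions close up with exactly two generators and no extra relation. I would organize it by weight and induct on the PBW filtration, cross-checking dimensions in low weight against Step 4.
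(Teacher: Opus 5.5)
The paper does not prove this statement; it quotes it from \cite{BVY_Heis}, so your argument has to stand on its own.

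Steps 3 and 4 are sound. Step 4 uses only that $O_1(M(1))$ acts by zero on $\Omega_1(M)$ and that $o(u*_1v)=o(u)o(v)$ there. Step 3 is fine provided you accept the nontrivial direction of the characterization of $O_n(V)$ recalled in the introduction. That characterization ranges over all admissible modules, so you need $M\cong M(1)\otimes\Omega$ for every admissible $M$, not only for the lower-bounded generalized modules of Proposition~\ref{hmod}. This does follow from the standard $\widehat{\mathfrak{h}}$-structure theorem, because positive modes lower the $\mathbb{N}$-degree.

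The genuine gap is Steps 1--2. You assert without argument that $x$ and $y$ commute, and generation is only a plan. The relations you actually have, for $u=\alpha(-1)\mathbf{1}$ and $v\in M(1)$, are $\alpha(-m-4)v+2\alpha(-m-3)v+\alpha(-m-2)v\in O_1(M(1))$ (Proposition~\ref{prop1}) and $u*_1v=-2\alpha(-3)v-3\alpha(-2)v$. These leave every monomial in $\alpha(-1),\alpha(-2)$, and you never show that the $(L(-1)+L(0))$-relations reduce these to polynomials in $x,y$. ``Cross-checking dimensions in low weight'' is not that induction. Without surjectivity, Steps 3--4 only show that $\mathbb{C}[x,y]/I$ embeds in $A_1(M(1))$.

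You can close the gap with the input you already use in Step 3, and this makes Step 2 unnecessary. For $v\in M(1)$, $o(v)$ is a sum of normally ordered Heisenberg monomials of total degree $0$. Hence on $\Omega_1(M)=\Omega\oplus\alpha(-1)\Omega$ it acts by $w\mapsto p_v(\alpha(0))w$ and $\alpha(-1)w\mapsto\alpha(-1)q_v(\alpha(0))w$ for $w\in\Omega$, where $p_v,q_v$ are polynomials depending only on $v$.

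Testing on the modules $M(1)\otimes\mathbb{C}w$ with $\alpha(0)w=\lambda w$, for all $\lambda$, shows that $v\mapsto(p_v,q_v)$ is a well-defined algebra map $\Phi:A_1(M(1))\to\mathbb{C}[t]\times\mathbb{C}[t]$. The nontrivial direction of the characterization makes $\Phi$ injective.

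Now $\Phi(x)=(t,t)$ and $\Phi(y)=(t^2,t^2+2)$, so $\Phi(y-x*_1x)=(0,2)$. The image of the subalgebra generated by $x,y$ therefore contains $(0,1)$, $(1,0)$, $(t,0)$, $(0,t)$, hence all of $\mathbb{C}[t]\times\mathbb{C}[t]$. Injectivity of $\Phi$ then gives at once that $A_1(M(1))$ is commutative and generated by $x,y$. The kernel of $f\mapsto(f(t,t^2),f(t,t^2+2))$ is $(x^2-y)\cap(x^2-y+2)=I$, since the two factors differ by the unit $2$ and so are comaximal.

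Everything then rests on that nontrivial direction of the characterization of $O_n(V)$, which is a substantial external input. If you want to avoid it, you must both carry out Step 2 and replace Step 3 by an explicit expression of $(x^2-y)(x^2-y+2)$, computed with $*_1$, in terms of the spanning elements of $O_1(M(1))$.
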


To motivate the construction of \[[\alpha(-1){\bf 1}]_{11}\diamond [\alpha(-1){\bf 1}]_{11}-[\alpha(-1)^2{\bf 1}]_{11}+2[{\bf 1}]_{11}=[\alpha(-1){\bf 1}*_1\alpha(-1){\bf 1}-\alpha(-1)^2{\bf 1}+2{\bf 1}]_{11},\] observe that, given an admissible $M(1)$-module $M$, the factor $x^2-y$ given above in the statement of Theorem \ref{BVY_Heis} acts as $0$ on $\Omega_0(M)$ but does not act as zero on $\Omega_1(M)\setminus\Omega_0(M)$. On the other hand, the factor $x^2-y+2$ acts as $0$ on $\Omega_1(M)\setminus\Omega_0(M)$ but not as zero on $\Omega_0(M).$ One can then easily generalize the construction of $[\alpha(-1)]_{11}\diamond [\alpha(-1){\bf 1}]_{11}-[\alpha(-1)^2{\bf 1}]_{11}+2[{\bf 1}]_{11}$ to obtain the vectors given in Proposition \ref{ker1}.
\end{remark}

The following is an immediate corollary of Proposition \ref{ker1} and Theorem \ref{main}:
\begin{corollary}
    There are elements in $Q^\infty(M(1))$ that are not generated by elements of $O^\infty(M(1))$ and $J^\infty(M(1)).$
\end{corollary}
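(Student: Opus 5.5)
The plan is a proof by contradiction that combines Theorem \ref{main} with Proposition \ref{ker1}. Fix $n\in\Z_+$ and set
\[\mathfrak{q}_n:=[\alpha(-1){\bf 1}]_{nn}\diamond[\alpha(-1){\bf 1}]_{nn}-[\alpha(-1)^2{\bf 1}]_{nn}+2n[{\bf 1}]_{nn}.\]
By Proposition \ref{ker1}, $\mathfrak{q}_n\in Q^\infty(M(1))$. Suppose $\mathfrak{q}_n$ were generated by elements of $O^\infty(M(1))$ and $J^\infty(M(1))$. Theorem \ref{main} would then say that $\mathfrak{q}_n$ satisfies the diagonal shift property. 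Proposition \ref{ker1} says it does not, so $\mathfrak{q}_n$ lies outside the span generated by these elements.

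The step that needs care is that the diagonal shift property is defined only for single-entry elements $[v]_{kl}$, while $\mathfrak{q}_n$ is written as a combination of products. So first I will expand the product via the definition of $\diamond$, as in the proof of Proposition \ref{ker1}. This gives $[\alpha(-1){\bf 1}]_{nn}\diamond[\alpha(-1){\bf 1}]_{nn}=[w]_{nn}$ for the explicit $w$ displayed there. Hence $\mathfrak{q}_n=[v]_{nn}$ with
\[v=w-\alpha(-1)^2{\bf 1}+2n{\bf 1},\]
which is a genuine single-entry element with $k=l=n\in\Z_+$.

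Next I will read Theorem \ref{main} at the level of entries: if $[v]_{nn}$ lies in the span of $O^\infty$ and $J^\infty$, then $[v]_{n-1,n-1}\in Q^\infty(M(1))$. Here I need to check that the shift operations in Corollary \ref{kl}, Proposition \ref{mult}, Proposition \ref{mult0} and Proposition \ref{J} are linear and act entrywise on the $(n,n)$ component. Granting that, the last computation in the proof of Proposition \ref{ker1} shows
\[[v]_{n-1,n-1}\equiv[\alpha(-1){\bf 1}]_{n-1,n-1}\diamond[\alpha(-1){\bf 1}]_{n-1,n-1}-[\alpha(-1)^2{\bf 1}]_{n-1,n-1}+2n[{\bf 1}]_{n-1,n-1}\]
modulo $Q^\infty(M(1))$. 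This element acts on $[\alpha(-1)^{n-1}{\bf 1}\otimes w]_{n-1}$ by $2\neq 0$, so $[v]_{n-1,n-1}\notin Q^\infty(M(1))$, which contradicts the shift property.

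The main obstacle is justifying that ``generated by'' in Theorem \ref{main} is compatible with extracting the single $(n,n)$ entry. A general element of the span is an infinite combination spread over many positions, and the diagonal shift must be applied position by position. I would handle this by noting that the shift map $[a]_{kl}\mapsto[a]_{k-1,l-1}$ is linear and that each generator's shift stays in $Q^\infty$. It follows that the shift of the $(n,n)$-component of any element of the span lies in $Q^\infty$. Taking any single $n\in\Z_+$ then proves the corollary.
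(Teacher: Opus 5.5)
Your proposal is correct and follows the paper's argument: the paper records this corollary as an immediate consequence of Proposition \ref{ker1} and Theorem \ref{main}, and your steps supply the bookkeeping the paper leaves implicit. Those steps are rewriting $\mathfrak{q}_n$ as the single entry $[v]_{nn}$, noting that the $(n,n)$-component of any element of the span is a finite combination of generators at $(n,n)$, and using that each such generator shifts into $Q^\infty(M(1))$ (by the corollary after Corollary \ref{kl} and by Proposition \ref{J}).
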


Another corollary is then:
\begin{corollary}Huang's conjecture that for an arbitrary grading restricted vertex algebra $V$,
$O^\infty(V)$ and $J^\infty(V)$ generate $Q^\infty(V)$ does not hold in general.
\end{corollary}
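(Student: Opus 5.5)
The statement to prove is the last corollary: Huang's conjecture fails in general. It suffices to exhibit one grading-restricted vertex algebra $V$ for which the two-sided ideal (for $\diamond$) generated by $O^\infty(V)$ and $J^\infty(V)$ is strictly smaller than $Q^\infty(V)$. I take $V=M(1)$ and argue by contradiction: I assume $Q^\infty(M(1))$ is generated by $O^\infty(M(1))$ and $J^\infty(M(1))$.

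First, Theorem \ref{main} says that every element generated by $O^\infty(V)$ and $J^\infty(V)$ satisfies the diagonal shift property. Under the assumed conjecture, every element of $Q^\infty(M(1))$ would then satisfy it. Concretely, this means that whenever $[v]_{kl}\in Q^\infty(M(1))$ with $k,l\in\Z_+$, we would also have $[v]_{k-1,l-1}\in Q^\infty(M(1))$.

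Second, I apply Proposition \ref{ker1} with $n=1$. It says that
$[\alpha(-1){\bf 1}]_{11}\diamond[\alpha(-1){\bf 1}]_{11}-[\alpha(-1)^2{\bf 1}]_{11}+2[{\bf 1}]_{11}$
lies in $Q^\infty(M(1))$ but does not satisfy the diagonal shift property. This contradicts the first step, so Huang's conjecture fails for $M(1)$ and hence does not hold in general. Equivalently, the preceding corollary already gives elements of $Q^\infty(M(1))$ that are not generated by $O^\infty(M(1))$ and $J^\infty(M(1))$, and the present corollary is just a restatement of that fact.

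The one point needing care, and the main obstacle, is that the diagonal shift property is defined only for single-entry elements $[v]_{kl}$. Proposition \ref{ker1} applies it to a sum of $\diamond$-products and single entries, which must be rewritten as one entry $[v]_{11}$. The definition of $\diamond$ does this: $[\alpha(-1){\bf 1}]_{11}\diamond[\alpha(-1){\bf 1}]_{11}=[w]_{11}$, so the element is $[w-\alpha(-1)^2{\bf 1}+2{\bf 1}]_{11}$. Theorem \ref{main} must then be read as applying to such entries lying in the generated ideal. That is how the paper intends it, so no further argument is needed.
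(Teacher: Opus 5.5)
Your argument is correct and is exactly the paper's: the corollary comes from combining Theorem \ref{main} with Proposition \ref{ker1} for $V=M(1)$ (your choice $n=1$ is fine; any $n\in\Z_+$ works), and rewriting the element as the single entry $[w-\alpha(-1)^2{\bf 1}+2{\bf 1}]_{11}$ is the right bookkeeping for the diagonal shift property to apply. One caveat on your closing remark: the paper only checks the property for single entries of $O^\infty(V)$ and for individual $J$-generators, and the ``reading'' of Theorem \ref{main} for the whole $\diamond$-ideal they generate rests on an argument the paper does not write out, which follows from Propositions \ref{mult} and \ref{mult0} together with the fact that $Q^\infty(V)$ is a two-sided ideal.
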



\begin{thebibliography}{[CCCC]}


 \bibitem[AB1]{AB1} Addabbo, D. \& Barron, K. On generators and relations for higher level Zhu algebras and applications. {\em J. Algebra}. \textbf{623} pp. 496-540 (2023)


\bibitem[AB2]{AB2} Addabbo, D. \& Barron, K. The level two Zhu algebra for the Heisenberg VOA. {\em Comm. Algebra}. \textbf{51}, 3405-3463 (2023)


\bibitem[BVY1]{BVY_gen}Barron, K., Vander Werf, N. \& Yang, J. Higher level Zhu algebras and modules for vertex operator algebras. {\em J. Pure Appl. Algebra}. \textbf{223}, 3295-3317 (2019)
 
\bibitem[BVY2]{BVY_Heis}Barron, K., Vander Werf, N. \& Yang, J. The level one Zhu algebra for the Heisenberg VOA. {\em Affine, Vertex And W-algebras}. \textbf{37} pp. 37-64

\bibitem[BVY3]{BVY_Vir}Barron, K., Vander Werf, N. \& Yang, J. The level one Zhu algebra for the Virasoro vertex operator algebra. {\em Vertex Operator Algebras, Number Theory And Related Topics}. \textbf{753} pp. 17-43
 
\bibitem[DLM]{DLM}Dong, C., Li, H. \& Mason, G. VOAs and associative algebras. {\em J. Algebra}. \textbf{206}, 67-96 (1998)

\bibitem[FHL]{FHL}Frenkel, I., Huang, Y. \& Lepowsky, J. On axiomatic approaches to vertex operator algebras and modules. {\em Mem. Amer. Math. Soc.}. \textbf{104}, viii+64 (1993)
 
 \bibitem[FLM]{FLM}Frenkel, I., Lepowsky J. and Meurman, A. {\it VOAs and the Monster}, Pure and Appl. Math., 134, Academic Press, New York (1988)
 
 \bibitem[FZ]{FZ}Frenkel, I., Zhu, Y.-C.: {\it VOAs associated to representations of affine and Virasoro algebras}. Duke Math. J. 66, 123–168 (1992)

\bibitem[H]{H}Huang, Y. Differential equations, duality and modular invariance. {\em Commun. Contemp. Math.}. \textbf{7}, 649-706 (2005)

\bibitem[H1]{H4}Huang, Y. Cofiniteness conditions, projective covers and the logarithmic tensor product theory. {\em J. Pure Appl. Algebra}. \textbf{213}, 458-475 (2009)

\bibitem[H2]{H1}Huang, Y. A cohomology theory of grading-restricted vertex algebras. {\em Comm. Math. Phys.}. \textbf{327}, 279-307 (2014)

\bibitem[H3]{H3}Huang, Y. Associative algebras and intertwining operators. {\em Comm. Math. Phys.}. \textbf{396}, 1-44 (2022)
 
\bibitem[H4]{H2}Huang, Y. Associative algebras and the representation theory of grading-restricted vertex algebras. {\em Commun. Contemp. Math.}. \textbf{26}, Paper No. 2350036, 46 (2024)
 
 
\bibitem[H5]{H5} Huang, Y. Modular invariance of (logarithmic) intertwining operators. {\em Comm. Math. Phys.}. \textbf{405}, Paper No. 131, 82 (2024)

 \bibitem[LL]{LL}Lepowsky, J. \& Li, H. Introduction to VOAs and their representations. (Birkhäuser Boston, Inc., Boston, MA, 2004)

\bibitem[M]{M}Milas, A. Logarithmic intertwining operators and vertex operators. {\em Comm. Math. Phys.}. \textbf{277}, 497-529 (2008)

\bibitem[Miy]{Miy}Miyamoto, M. Modular invariance of vertex operator algebras satisfying $C_2$-cofiniteness. {\em Duke Math. J.}. \textbf{122}, 51-91 (2004)

\bibitem[Z]{Z}Zhu, Y. Modular invariance of characters of VOAs. {\em J. Amer. Math. Soc.}. \textbf{9}, 237-302 (1996)
\end{thebibliography}
\end{document}